\newenvironment{cproof}{\begin{proof}[Proof of the claim]}{\end{proof}}
\title{On the stability of hyperbolicity under quantitative measure equivalence}
\author{Thiebout Delabie\footnote{supported by grant P2NEP2 181564 of the Swiss National Science Foundation}, Juhani Koivisto, François Le Maître\footnote{Research 
partially supported by Projet ANR-17-CE40-0026 AGRUME, Projet 
ANR-19-CE40-0008 AODynG and IdEx Université de Paris, ANR-18-IDEX-0001.}  and Romain Tessera}
\date{\today}
\newtheorem{theorem}{Theorem}[section]
\newtheorem{lemma}[theorem]{Lemma}
\newtheorem{proposition}[theorem]{Proposition}
\newtheorem{corollary}[theorem]{Corollary}
\newtheorem*{theoremStatement}{Theorem}
\newtheorem{thmintro}{Theorem}
\newtheorem{corintro}[thmintro]{Corollary}
\theoremstyle{definition}
\newtheorem{definition}[theorem]{Definition}
\newtheorem{remark}[theorem]{Remark}
\newtheorem{clai}[theorem]{Claim}
\newtheorem{que}[theorem]{Question}
\newcommand{\abs}[1]{\left\lvert #1\right\rvert}
  \newcommand{\la}{\left\langle}
\newcommand{\ra}{\right\rangle}
\newcommand{\vol}{\operatorname{Vol}}
\newcommand{\ent}{\operatorname{Ent}}
\renewcommand{\leq}{\leqslant}
\renewcommand{\geq}{\geqslant}
\renewcommand{\le}{\leqslant}
\renewcommand{\ge}{\geqslant}
\renewcommand{\epsilon}{\varepsilon}
\newcommand{\Aut}{\operatorname{Aut}}
\newcommand{\SL}{\operatorname{SL}}
\newcommand{\N}{\mathbb{N}}
\newcommand{\Z}{\mathbb{Z}}
\newcommand{\R}{\mathbb{R}}
\newcommand{\eps}{\varepsilon}
\newcommand{\LL}{\mathrm{L}}
\newcommand{\act}{\curvearrowright}
\newcommand{\inv}{^{-1}}
\newcommand{\actom}{*}
\newcommand{\actx}{\cdot}
\newcommand{\AlignRight}[1]{\ifmeasuring@#1\else\omit\hfill$\displaystyle#1$\fi\ignorespaces}
\begin{document}

\maketitle

\begin{abstract}
A well-known result of Shalom says that lattices in SO$(n,1)$ are $\LL^p$ measure equivalent for all $p<n-1$. 
His proof actually yields the following stronger statement: 
the natural coupling resulting from a suitable choice of fundamental domains from a uniform lattice to a non-uniform one is  $(\LL^p,\LL^{\infty})$.  
Moreover, it is easy to see that the coupling is cobounded: the fundamental domain of the uniform lattice is contained in a union of finitely many translates of the fundamental domain of the non-uniform one.  
The purpose of this note is to prove that this statement is sharp in the following sense: 
if a ME-coupling from a hyperbolic group to a non-hyperbolic group is cobounded and $(\LL^p,\LL^{\infty})$, 
then $p$ must be less than some $p_0$ only depending on the hyperbolic group. \end{abstract}
\tableofcontents
\section{Introduction}

A \textbf{measure equivalence coupling} from a countable group $\Gamma$ to another countable group $\Lambda$ is given by the following data:
\begin{enumerate}[(1)]
    \item a $\sigma$-finite standard measured space $(\Omega,\mu)$ endowed with commuting $\Gamma$ and $\Lambda$ actions denoted by $(\gamma,x)\mapsto \gamma\actom x$ and $(\lambda,x)\mapsto \lambda\actom x$ and
    \item  chosen
\emph{finite measure} \textbf{fundamental domains} $X_\Gamma$ for the $\Gamma$-action and $X_\Lambda$ for the $\Lambda$-action,
\end{enumerate} 
where a fundamental domain (for a given \(\Gamma\)-action on \((\Omega,\mu)\) ) is a measurable subset $X_\Gamma\subseteq X$ 
such that the map $\Gamma\times X_\Gamma\to \Omega$ taking $(\gamma,x)$ to $\gamma\actom x$ is a measure-preserving bijection when endowing $\Gamma\times X_\Gamma$ with the product of the counting measure and the restriction of $\mu$ to $X_\Gamma$.
In other words, both the $\Gamma$ and $\Lambda$ actions on $\Omega$ are measurably trivial free actions, but they are paired together in an interesting way. 
When there is a measure equivalence coupling from $\Gamma$ to $\Lambda$, they are said to be \textbf{measure equivalent} (see \cite[Sec.~2]{furmanGromovMeasureEquivalence1999} for the fact that measure equivalence is an equivalence relation, thus justifying its name).

The most basic example of a measure equivalence coupling is provided by two finite index subgroups of the same group, one acting on by right translations and the other acting by left translations: the fundamental domains are obtained by choosing coset representatives.
More generally, any two lattices in a common locally compact group are measure equivalent.
While measure equivalence was introduced by Gromov as a measured version of quasi-isometry 
\cite[Sec.~0.5.E.]{gromovAsymptoticInvariantsInfinite1993},
it behaves very differently.
For instance, since any orbit equivalent groups are measure equivalent, the Ornstein-Weiss theorem guarantees that all countable infinite amenable groups are pairwise measure equivalent \cite{ornsteinErgodicTheoryAmenable1980}, while finitely generated amenable groups can be distinguished up to quasi-isometry by various invariants (such as volume growth, isoperimetric profile, etc.).

Another good example is provided by hyperbolicity, a cornerstone of geometric group theory: 
uniform lattices in rank one simple Lie groups are hyperbolic but non-uniform ones are not hyperbolic, 
except for SL$(2,\R)$ where
all lattices are hyperbolic\footnote{This is a classical result: see for instance \cite[Example~12.9]{drutuGeometricGroupTheory2018}.}.
Hence these are interesting instances of groups that are measure equivalent, yet with very different geometric properties. 

In this note, we are interested in quantitative refinements of measure equivalence ensuring that hyperbolicity is preserved. 
While the general idea of adding quantitative constrainsts can be traced back to the rigidity result of Belinskaya
\cite{belinskayaPartitionsLebesguespace1968} for $\LL^1$ orbit equivalent $\Z$-actions, it only reemerged a decade ago in the work of Bader, Furman, Sauer \cite{baderIntegrableMeasureEquivalence2013}, and also Austin and Bowen \cite{austinIntegrableMeasureEquivalence2016}.  
A more systematic theory of quantitative measure equivalence have been developped in \cite{delabieQuantitativeMeasureEquivalence2022}. 

Quantitative measure equivalence relies crucially on the fact that any measure equivalence coupling comes with two \emph{cocycles} $\alpha:\Gamma\times X_\Lambda\to \Lambda$ and $\beta:\Lambda\times X_\Gamma\to \Gamma$ where for all $\gamma\in\Gamma$ and $x\in X_\Lambda$, we define  $\alpha(\gamma,x)\in \Lambda$ as the unique element which takes the point $\gamma\actom x\in \Omega$ back into the fundamental domain $X_\Lambda$, namely it is uniquely defined by the formula
\(\alpha(\gamma,x)\actom  (\gamma\actom x)\in X_\Lambda\)
(and vice versa for $\beta$).
 \begin{definition}\label{defIntro:QuantME}
     Let $\phi, \psi \colon \mathbb{R}_{\geq 0}\to \mathbb{R}_{\geq 0}$ be non-decreasing functions and let $S_\Gamma\subset \Gamma$, $S_\Lambda\subset \Lambda$ be finite generating sets for $\Gamma$ and $\Lambda$.  
     We will say that the measure equivalence coupling from $\Gamma$ to $\Lambda$ is \textbf{$(\phi,\psi)$-integrable} if there is $\delta>0$ such that for all $s\in S_\Gamma$ 
     \[
        \int_{X_\Lambda} \phi\left(\delta|\alpha(s,x)|_{S_\Lambda}\right)\, d\mu(x) < +\infty.
     \]
     and for all $t\in S_\Lambda$,
     \[
        \int_{X_\Gamma} \psi\left(\delta|\beta(t,x)|_{S_\Gamma}\right)\, d\mu(x) < +\infty.        
     \]
\end{definition}

Note that we need  the multiplicative constant $\delta$ so that the definition does not depend on the choice of the generating set in general. Such a constant can be dropped when $\phi(t)=t^p$, but not when 
$\phi(t)=\exp(t)$ (see also Remark~\ref{rmk: equivalence vs similarity}). We also refer the reader to Section \ref{sec: rel between fund dom} for another cocycle-free way 
of defining the above notions which was used as a starting point in \cite[Sec.~2]{delabieQuantitativeMeasureEquivalence2022}.

When \(\varphi(t)=t^p\)   (resp.~$\psi(t)=t^p$) we replace \(\varphi\)  (resp.~\(\psi\)) by \(\LL^p\) in the notation for integrability since we are asking that the corresponding cocycle is in \(\LL^p\). Being in $\LL^p$ is itself an integrability condition,
so we will simply speak of $(\LL^p,\psi)$ measure equivalence couplings. This notation is naturally extended to the case \(p=\infty\) (boundedness condition)
or \(p=0\) (no integrability condition):
for instance a measure equivalence coupling is \((\LL^0,\LL^\infty)\) simply when for every \(t\in S_\Lambda\),
the map $\beta(\lambda,\cdot)$ is essentially bounded. 
In addition, an $(\LL^p,\LL^p)$ measure equivalence coupling is commonly referred to as an $\LL^p$ measure equivalence coupling.

It is important to note that unlike measure equivalence, the above integrability conditions do depend on the choice of the fundamental domains $X_\Gamma$ and \(X_\Lambda\):
other such domains may lead to better or worse integrability properties.
However, these integrability conditions still miss an important aspect of the coupling, namely the relative position of the fundamental domains $X_\Gamma$ and $X_\Lambda$. 
So let us say that a measure equivalence coupling from $\Gamma$ to $\Lambda$ is {\bf cobounded} if  $X_{\Gamma}$ is contained in the union of finitely many $\Lambda$-translates of  $X_{\Lambda}$.  %So let us say that an measure equivalence coupling from $\Gamma$ to $\Lambda$ is {\bf cobounded} if  $X_{\Lambda}$ is contained in the union of finitely many $\Gamma$-translates of  $X_{\Gamma}$.
If $X_{\Lambda}$ is also contained in the union of finitely many $\Gamma$-translates of  $X_{\Gamma}$, then we shall say that the coupling is \textbf{mutually cobounded}. %If $X_\Gamma$ is also covered by the union of finitely many $\Lambda$-translates of $X_\Lambda$, then we shall say that the coupling is \textbf{mutually cobounded}. 
A particular instance when this property is satisfied is when $X_\Gamma=X_\Lambda$, in which case the coupling is called an orbit equivalence (OE) coupling\footnote{see \cite[Sec.~2.6]{delabieQuantitativeMeasureEquivalence2022} for the connection with the usual notion of orbit equivalence.}.  
To illustrate the relevance of this notion, 
we quote a result of Sauer in his PhD thesis \cite{sauerMathrmLInvariantsGroups2002} (very close to Shalom's \cite[Thm.~2.1.7.]{shalomHarmonicAnalysisCohomology2004}): any two amenable groups are quasi-isometric if and only if there exists an $\LL^{\infty}$  mutually cobounded measure equivalence coupling between them.

\paragraph{Uniform versus non-uniform lattices in rank one simple Lie groups.}

In \cite{shalomRigidityUnitaryRepresentations2000}, Shalom proves that any two lattices in SO$(n,1)$ are $\LL^p$ measure equivalent for all $p<n-1$. When one of the two lattices is uniform, one can strengthen this statement as follows. 
Let $n\geq 2$, and let $\Gamma$ and $\Lambda$ be two lattices  in SO$(n,1)$  such that $\Gamma$ is uniform. %\flmnote{On est d'accord que si $\Lambda$ est uniforme ça marche quand même? C'est ce qu'on dit dans le théorème d'après}. 
We consider the coupling associated to the action of $\Gamma$ and $\Lambda$ respectively by left and right-translations on the measure space SO$(n,1)$ equipped with an invariant Haar measure. Shalom showed that for any relatively compact fundamental domain $X_{\Gamma}$ for $\Gamma$ and a suitable fundamental domain $X_{\Lambda}$ for $\Lambda$, the resulting coupling is an $(\LL^p,\LL^{\infty})$ measure equivalence coupling from $\Gamma$ to $\Lambda$, for all $p<n-1$. Exploiting the relative compactness of $X_{\Gamma}$, one can check that this coupling is moreover cobounded.
We summarize this as follows.

\begin{theoremStatement}[{Shalom \cite[Thm.~3.6]{shalomRigidityUnitaryRepresentations2000}}] 
Let $\Gamma$ and $\Lambda$ be two lattices in SO$(n,1)$ such that $\Gamma$ is uniform. Then  there exists a cobounded measure equivalence coupling from $\Gamma$ to $\Lambda$  that is $(\LL^p,\LL^{\infty})$-integrable for all $p<n-1$. 
\end{theoremStatement}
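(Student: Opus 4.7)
The plan is to realize the coupling on $\Omega = G =$ SO$(n,1)$ equipped with a Haar measure $\mu$, with $\Gamma$ and $\Lambda$ acting respectively by left and (inverse) right multiplication so that the two actions commute. Since $\Gamma$ is uniform, I would pick a relatively compact fundamental domain $X_\Gamma\subset G$. For the non-uniform lattice $\Lambda$, I would pick $X_\Lambda$ of Siegel type: a relatively compact ``thick'' piece $C_0$ together with finitely many cusp pieces $C_1,\dots,C_k$, one per $\Lambda$-orbit of cusps of $\mathbb{H}^n$; in upper half-space coordinates centered at the cusp $\xi_i$, the projection of $C_i$ to $\mathbb{H}^n$ has the form $\{(\vec v,t):\vec v\in F_i,\ t\geq t_0\}$, where $F_i\subset\mathbb{R}^{n-1}$ is a bounded fundamental domain for the (virtually abelian, rank $n-1$) parabolic subgroup of $\Lambda$ fixing $\xi_i$.

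Coboundedness and the $\LL^\infty$ bound on $\beta$ should both follow immediately from the relative compactness of $X_\Gamma$: the family $\{\lambda\actom X_\Lambda\}_{\lambda\in\Lambda}$ tiles $G$, so only finitely many tiles meet the compact set $\overline{X_\Gamma}$; and for $t\in S_\Lambda$ and $x\in X_\Gamma$, the point $t\actom x$ stays in the relatively compact set $S_\Lambda\actom\overline{X_\Gamma}$, so $\beta(t,x)\in\Gamma$ takes finitely many values, each of bounded word length. The real work is the $\LL^p$ bound on $\alpha$ for $p<n-1$. Fix $s\in S_\Gamma$; the contribution of $C_0$ to $\int_{X_\Lambda}|\alpha(s,x)|_{S_\Lambda}^p\,d\mu(x)$ is finite by the same compactness argument. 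On each cusp piece $C_i$, I would parametrize by hyperbolic depth $h=\log t$, so that the volume form $d\vec v\,dt/t^n$ produces a horospherical slice of measure $\asymp e^{-(n-1)h}\,dh$ at depth $h$. The key estimate will be
\[
|\alpha(s,x)|_{S_\Lambda}\leq C_s\, e^{h}\quad\text{for }x\in C_i\text{ at depth }h,
\]
and Fubini then gives
\[
\int_{C_i}|\alpha(s,x)|_{S_\Lambda}^p\,d\mu(x)\leq C\int_0^\infty e^{ph}\cdot e^{-(n-1)h}\,dh,
\]
which converges precisely when $p<n-1$.

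The main obstacle will be proving this word-length estimate. Heuristically, the bounded isometry $s$ sends the cusp $\xi_i$ to some boundary point $s\xi_i\in\partial\mathbb{H}^n$, and for $x$ at depth $h$ the image $sx$ lies close to $s\xi_i$ at hyperbolic height $\asymp e^{-h}$; returning $sx$ to $X_\Lambda$ decomposes into a ``cusp transition'' element (of small word length in $\Lambda$, obtained from a Dirichlet-type approximation of $s\xi_i$ by cusps of $\Lambda$) followed by a parabolic adjustment in the destination cusp, whose word length in $\Lambda$ is dominated by the horospherical Euclidean displacement, itself of order $e^h$ because parabolic subgroups of non-uniform rank-one lattices are exponentially distorted. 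Making this rigorous requires carefully tracking the $\Lambda$-orbit of $s\xi_i$, separately handling the degenerate cases where $s$ stabilizes $\xi_i$ or maps it to another cusp of $\Lambda$ (which produce even smaller word lengths), and verifying that the parabolic contribution genuinely dominates. The exponential parabolic distortion responsible for the $e^h$ factor is precisely what enforces the sharp threshold $p=n-1$.
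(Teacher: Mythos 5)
Your construction is the one the paper attributes to Shalom: realize the coupling on $G=\mathrm{SO}(n,1)$ with Haar measure and commuting left/right actions, take a relatively compact $X_\Gamma$ and a Siegel-type $X_\Lambda$, and observe that the relative compactness of $X_\Gamma$ gives both coboundedness and the $\LL^\infty$-bound on $\beta$. The slice-volume computation in the cusps ($\asymp e^{-(n-1)h}$ at hyperbolic depth $h$) is also correct, and reducing the $\LL^p$-bound on $\alpha$ to a cusp estimate of the form $|\alpha(s,x)|_{S_\Lambda}\lesssim e^h$ is exactly the right shape of estimate to force the threshold $p<n-1$. The paper itself does not reprove Shalom's estimate, so the fair comparison is with Shalom, and your overall scaffolding is the standard one.

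However, the justification you give for the key word-length estimate is not correct as stated, and the error is substantive. You assert that the $\Lambda$-word length of the parabolic correction ``is dominated by the horospherical Euclidean displacement\ldots because parabolic subgroups of non-uniform rank-one lattices are exponentially distorted.'' This is backwards on both counts. First, the maximal parabolic subgroups of a non-uniform lattice $\Lambda$ in a rank-one Lie group are \emph{undistorted} in $\Lambda$ (they are quasi-isometrically embedded; $\Lambda$ is relatively hyperbolic with respect to them); the exponential distortion you have in mind lives in the ambient group $G$, where the horospherical subgroup is contracted by the geodesic flow, not in the discrete lattice, whose cusp stabilizers contain no such contracting elements. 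Second, if the parabolics \emph{were} exponentially distorted in $\Lambda$, a Euclidean displacement of order $e^h$ would yield a $\Lambda$-word length of order $\log(e^h)=h$, and then $\int_0^\infty h^p\,e^{-(n-1)h}\,dh$ would converge for \emph{all} $p$, erasing the threshold $n-1$ and contradicting its sharpness. The correct mechanism is the opposite one: it is precisely the undistortedness of the parabolics in $\Lambda$ that makes the horospherical displacement contribute at full Euclidean scale to $|\alpha(s,x)|_{S_\Lambda}$. Beyond this, the quantitative claim that the relevant displacement is $\asymp e^h$ (rather than something smaller governed by the Diophantine quality of the approximation of $s\xi_i$ by $\Lambda$-cusps) still needs a genuine argument — this is the content you defer to Shalom, and it is where the real work is, so simply repairing the distortion claim is not by itself a proof.
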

By contrast we prove the following rigidity result.  %Recall that the entropy of a finitely generated group $\Gamma$ associated to finite generating set $S_\Gamma$ is the rate of exponential growth of the corresponding Cayley graph, i.e.\  $\ent(S_\Gamma)=\limsup_{r\to \infty} \frac{\log |S_\Gamma^n|}{n}$.
%Recall that $\LL^{<\infty}$ means $\LL^p$ for every $p$. 

\begin{thmintro}[{see Cor.~\ref{cor:Lp-infty}}]\label{thmintroHyper}
	Let $\Gamma$ be a finitely generated hyperbolic group. There 
	exists $p>0$   such that if there exists a 
	cobounded $(\LL^p,\LL^{\infty})$ measure equivalence coupling from a finitely generated group $\Gamma$ to $\Lambda$, then $\Lambda$ is also hyperbolic. 
\end{thmintro}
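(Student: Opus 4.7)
My plan is to prove Theorem~\ref{thmintroHyper} by showing that $\Lambda$ has subquadratic Dehn function, which by the Bowditch--Gromov--Papasoglu theorem forces $\Lambda$ to be hyperbolic. The preliminary step is to establish that $\Lambda$ is finitely presented; since $\Gamma$ is hyperbolic (hence finitely presented), this should follow from the cobounded $(\LL^p,\LL^{\infty})$ coupling by standard quantitative measure equivalence arguments, analogous to the transfer of finite generation worked out in \cite{delabieQuantitativeMeasureEquivalence2022}.

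The core argument is a three-step transfer for null-homotopic words in $\Lambda$. Given a word $w=t_1\cdots t_n$ over $S_\Lambda$ representing the identity in $\Lambda$, and for each $x\in X_\Gamma$, the cocycle identity for $\beta$ produces a word $\tilde w(x)$ in $\Gamma$ built from the values of $\beta$ along the partial products $t_i t_{i+1}\cdots t_n$; since $w$ is trivial in $\Lambda$, $\tilde w(x)$ is trivial in $\Gamma$. The $\LL^{\infty}$-boundedness of $\beta$ on the generators $t_i$ yields $|\tilde w(x)|_{S_\Gamma}\le Cn$ uniformly in $x$, and the linear Dehn function of the hyperbolic group $\Gamma$ then provides a van Kampen diagram $D_x$ for $\tilde w(x)$ with $O(n)$ 2-cells, each bounded by a relator of a fixed finite presentation of $\Gamma$. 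Finally, one pulls $D_x$ back to $\Lambda$ via the cocycle $\alpha$: each edge of $D_x$, read as a generator of $\Gamma$ acting on a point $y\in X_\Lambda$, becomes a path in $\Lambda$ of length $|\alpha(\cdot,y)|_{S_\Lambda}$, and each 2-cell becomes a loop in $\Lambda$ whose length $\ell_c(x,y)$ is an $\LL^p$-integrable function of the basepoint by the assumption on $\alpha$. Coboundedness is what allows the various basepoints to be chosen coherently across the diagram.

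The resulting filling of $w$ in $\Lambda$ has area at most $\sum_c \mathrm{Dehn}_\Lambda(\ell_c(x,y))$, a sum of $O(n)$ terms. To extract a subquadratic estimate one integrates over $(x,y)$ and applies Fubini together with Markov's inequality to the $\LL^p$-control on the $\ell_c$'s, selecting a realization where the total cost grows slower than $n^2$. The threshold $p_0$ should enter through a balancing of this $\LL^p$ concentration against the exponential volume growth of $\Gamma$, which controls how many distinct basepoints are encountered along the $O(n)$ cells. The main obstacle is precisely the potential circularity in the area inequality: bounding $\mathrm{Dehn}_\Lambda$ in terms of itself. A natural way out is a self-improving bootstrap, where a weak a priori polynomial bound on $\mathrm{Dehn}_\Lambda$ (available from finite presentability together with the coupling) is iteratively sharpened via the transfer procedure until one falls below the $n^2$ threshold; making this iteration converge is the delicate quantitative heart of the argument.
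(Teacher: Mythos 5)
Your approach is genuinely different from the paper's — the paper uses the embedded-cycle characterization of non-hyperbolicity (arbitrarily large $18$-bi-Lipschitz embedded cycles, \cite[Prop.~5.1]{humePoorlyConnectedGroups2020}) and confronts it with a quantitative upper bound on the contraction constant of bi-Lipschitz cycles in a $\delta$-hyperbolic space (\cref{prop:HypNotEmbed}), whereas you target the subquadratic Dehn function criterion. Unfortunately your plan has a gap that I do not see how to close. When you pull the van Kampen diagram $D_x$ back to $\Lambda$ via $\alpha$, the basepoints at which $\alpha$ is evaluated along the cells have the form $\gamma^{-1}\cdot x$ with $\gamma$ ranging, as $x$ varies, over a ball of radius $O(n)$ in $\Gamma$ (a minimal-area filling of a word of length $O(n)$ has diameter $O(n)$). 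Because the diagram, and hence the set of $\gamma$'s actually encountered, is chosen \emph{adaptively as a function of $x$}, you cannot pass the $\LL^p$ bound on $|\alpha(s,\cdot)|$ through Fubini at fixed cells, and the only safe move is a union bound over all $\gamma$ that could possibly appear, i.e.\ over $\vol_{S_\Gamma}(O(n))$ elements. That volume is exponential in $n$, so Markov plus a union bound only yields $\ell_c\lesssim \exp(cn/p)$ on a positive-measure set of $x$, which is useless. By contrast, the paper's lower estimate (\cref{claim:hyperbolic}) only needs to control the event that two transported cycle points land in a \emph{small} ball of radius $r(n)\sim \delta\log n$, and $\vol_{S_\Gamma}(r(n))$ is then merely polynomial in $n$; this logarithmic scale is forced by \cref{prop:HypNotEmbed} (the contraction constant of an $n$-cycle in a $\delta$-hyperbolic space is $\lesssim \delta\log n/n$) and has no analogue in a Dehn-function argument, where the filling diameter is intrinsically linear in $n$.

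There is also a foundational problem at the start: you assume $\Lambda$ is finitely presented, citing ``standard quantitative measure equivalence arguments,'' but no such transfer of finite presentability under a cobounded $(\LL^p,\LL^\infty)$ coupling is available — one cannot assume it a priori, since hyperbolicity of $\Lambda$ (hence finite presentability) is precisely the conclusion. Sauer's $\LL^\infty$-cobounded-ME-implies-QI theorem is specific to amenable groups and does not apply here. Your bootstrap moreover needs an initial polynomial bound on $\mathrm{Dehn}_\Lambda$, which again requires a presentation and an a priori isoperimetric estimate you do not have. The underlying intuition — transfer through the coupling, control lengths by Markov, balance against volume growth — does resonate with the real proof's balance $\vol_{S_\Gamma}(r(n))/\varphi(n/r(n))$ in \cref{claim:hyperbolic}, but the Dehn-function route inflates the relevant scale from $\log n$ to $n$ and this is fatal for the estimate.
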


\begin{remark}
The value of $p$ for which the conclusion holds is explicit: assuming that $\Gamma$ admits a Cayley graph that is $\delta$-hyperbolic and has volume entropy at most $\alpha$, one can take $p= 
	108\delta \alpha+2$. For the definition of volume entropy, see the paragraph which precedes Theorem~\ref{thm:hyperbolicExpME}.
\end{remark}

We immediately deduce the following converse of Shalom's result.
\begin{corintro}\label{corintroLatticerank1LpLinfty}
	Assume that $\Gamma$ is a uniform lattice in a center-free, real rank 1 simple Lie group $G$ and $\Lambda$ is another lattice of $G$. There exists $p$ only depending on $G$ such that if there exists a cobounded $(\LL^p,\LL^{\infty})$ measure equivalence coupling from $\Lambda$ to $\Gamma$, then $\Lambda$ must be uniform as well. \end{corintro}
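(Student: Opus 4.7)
The plan is to combine Theorem~\ref{thmintroHyper} with the structure theory of lattices in rank one simple Lie groups. First I would observe that $\Gamma$ is Gromov hyperbolic: since $G$ is a center-free, real rank one simple Lie group, the associated symmetric space $G/K$ is a negatively curved Riemannian manifold and hence Gromov hyperbolic, and the uniform lattice $\Gamma$ acts properly and cocompactly by isometries on $G/K$, so by the Milnor--\v{S}varc lemma $\Gamma$ is quasi-isometric to $G/K$ and therefore hyperbolic. Moreover, the hyperbolicity constant $\delta$ and volume entropy $\alpha$ of a Cayley graph of $\Gamma$ are controlled by the geometry of $G/K$, so the integrability threshold $p = 108\delta\alpha + 2$ from the remark following Theorem~\ref{thmintroHyper} can be chosen to depend only on $G$.

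Next I would apply Theorem~\ref{thmintroHyper} to the given cobounded $(\LL^p, \LL^\infty)$ coupling (possibly after interchanging the roles of source and target so that the hyperbolic group $\Gamma$ occupies the source position required by the theorem) to conclude that $\Lambda$ is also Gromov hyperbolic. I would then invoke the classical fact that whenever $G$ is not locally isomorphic to $\SL(2,\R)$, any non-uniform lattice $\Lambda$ in $G$ contains a copy of $\Z^2$: the unipotent radical of a proper parabolic subgroup of $G$ is a simply connected nilpotent Lie group of real dimension at least two, and $\Lambda$ intersects it in a cocompact lattice of rank at least two. Such a $\Lambda$ cannot be Gromov hyperbolic, so it must in fact be uniform.

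The main obstacle I anticipate is reconciling the direction of the coupling: the corollary is phrased ``from $\Lambda$ to $\Gamma$'', whereas Theorem~\ref{thmintroHyper} is phrased ``from $\Gamma$ to $\Lambda$'', and the $(\LL^p, \LL^\infty)$-integrability together with coboundedness is not symmetric under reversal of the coupling, so some care will be needed to match the hypotheses (or to invoke a version of the main theorem adapted to this direction, as presumably provided in the body of the paper). A secondary issue is the exceptional case $G \cong \PSL(2,\R)$, where non-uniform lattices are virtually free and hence hyperbolic; here the hyperbolicity argument alone fails, and one would need to substitute another quasi-isometry invariant (such as the number of ends) known to be preserved under cobounded $(\LL^p, \LL^\infty)$ couplings for $p$ large enough depending on $G$.
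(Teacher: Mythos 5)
Your proof follows the paper's intended route: Milnor--\v{S}varc gives that $\Gamma$ is hyperbolic, \cref{thmintroHyper} then shows $\Lambda$ is hyperbolic, and non-uniform lattices in a rank one simple Lie group not locally isomorphic to $\SL(2,\R)$ contain $\Z^2$ (from unipotent radicals of parabolics), hence are not hyperbolic. You are also right to hesitate over the coupling direction: the phrase ``from $\Lambda$ to $\Gamma$'' in the statement is at odds with the hypotheses of \cref{thmintroHyper} (which need the hyperbolic group as source) and with the parallel \cref{corintroLatticerank1}, which reads ``from $\Gamma$ to $\Lambda$''. Since $(\LL^p,\LL^\infty)$-integrability and coboundedness are asymmetric under swapping the roles of the two groups, the statement must be read with the coupling going from $\Gamma$ to $\Lambda$ in order for the deduction from \cref{thmintroHyper} to be immediate.

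Where your sketch falls short of a complete proof is the exceptional case $G$ locally isomorphic to $\SL(2,\R)$, where non-uniform lattices are virtually free and therefore hyperbolic, so the hyperbolicity argument says nothing. The paper does not use an ``ends'' argument there; it instead invokes, in the remark immediately following the corollaries, the $\LL^1$-measure-equivalence rigidity theorem of Bader--Furman--Sauer, which already shows that uniform and non-uniform lattices in $\SL(2,\R)$ are not $\LL^1$ measure equivalent (and any cobounded $(\LL^p,\LL^\infty)$ coupling with $p\ge 1$ is in particular $\LL^1$). Your suggested substitute, that the number of ends is preserved under cobounded $(\LL^p,\LL^\infty)$ couplings, is plausible but not established in the paper or by anything you cite, so the $\SL(2,\R)$ case is left open in your write-up; the paper closes it simply by quoting Bader--Furman--Sauer.
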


This raises the following question.
\begin{que}What is the infimum over all $p$ such that the previous result holds? Does it match Shalom's value $n-1$ for lattices in SO$(n,1)$?
\end{que}

As observed by Mikael de la Salle, Corollary~\ref{corintroLatticerank1LpLinfty} is in sharp contrast with what happens for lattices in higher rank simple Lie groups: indeed if  $\Gamma$ and  $\Lambda$ are lattices in a simple Lie group $\Gamma$ of rank $\geq 2$, then if $X_{\Lambda}$ and $X_{\Gamma}$ are  Dirichlet fundamental domains for $\Lambda$ and $\Gamma$, the resulting measure equivalence coupling is exponentially integrable \cite[Lemme
5.6]{delasalleStrongPropertyHigherrank2019}. Hence the following holds: %we obtain:
\begin{theoremStatement}[de la Salle] 
Let $\Gamma$ and $\Lambda$ be two lattices in a simple Lie group of rank at least $2$, such that $\Gamma$ is uniform. Then there exists a cobounded  $(\exp,\LL^{\infty})$ measure equivalence coupling from $\Gamma$ to $\Lambda$. %\flmnote{On peut virer le $c$ au vu de la définition ci-dessus, non? Et donc écrire simplement $\exp$ pour $\phi$}. \romain{Yes}
\end{theoremStatement}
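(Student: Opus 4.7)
The plan is to build the coupling directly on the ambient Lie group $G$, as in Shalom's approach in rank one, but to replace the choice of fundamental domain for $\Lambda$ by a Dirichlet fundamental domain and exploit the finer geometry of higher rank symmetric spaces to upgrade integrability from polynomial to exponential.

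Step 1 (the coupling). Take $\Omega = G$ equipped with a Haar measure $\mu$, carrying the commuting actions of $\Gamma$ by left translation and of $\Lambda$ by inverse right translation. Since $\Gamma$ is uniform, we may pick a relatively compact fundamental domain $X_\Gamma$ for the $\Gamma$-action. For the $\Lambda$-action, take the Dirichlet fundamental domain $X_\Lambda$: the set of $g \in G$ that are closer to the identity than to any nontrivial $\Lambda$-translate of the identity, with respect to a natural $G$-invariant metric coming from the symmetric space $G/K$.

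Step 2 (coboundedness and the $L^\infty$ cocycle). Coboundedness is immediate: $\overline{X_\Gamma}$ is compact, and since $\Lambda$ acts properly on $G$, it meets only finitely many translates $X_\Lambda \lambda$, so $X_\Gamma$ is covered by finitely many $\Lambda$-translates of $X_\Lambda$. The $L^\infty$-bound on $\beta$ is just as direct: for each $t \in S_\Lambda$, the relatively compact set $X_\Gamma t^{-1}$ meets only finitely many $\Gamma$-translates of $X_\Gamma$, so the cocycle $\beta(t, \cdot)$ takes essentially finitely many values on $X_\Gamma$ and in particular is essentially bounded.

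Step 3 (exponential integrability of $\alpha$). For each $s \in S_\Gamma$ and $\lambda \in \Lambda$, the level set $\{x \in X_\Lambda : \alpha(s, x) = \lambda\}$ coincides with $X_\Lambda \cap s^{-1} X_\Lambda \lambda$, reducing the problem to estimating the Haar measures of these intersections. The key input, precisely Lemme~5.6 of \cite{delasalleStrongPropertyHigherrank2019}, is an estimate of the form $\mu(X_\Lambda \cap s^{-1}X_\Lambda \lambda) \leq C \exp(-c\,|\lambda|_{S_\Lambda})$ with $c$ strictly greater than the exponential growth rate of $\Lambda$. Summing $\exp(\delta\,|\lambda|_{S_\Lambda})\,\mu(X_\Lambda \cap s^{-1}X_\Lambda \lambda)$ over $\lambda \in \Lambda$ then converges for sufficiently small $\delta > 0$, yielding the required exponential integrability of $\alpha$.

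The main obstacle is Step 3, and it is exactly where real rank at least two intervenes. The geometric picture, via reduction theory, is that the Dirichlet domain is essentially a finite union of Siegel sets, and the Haar measure is exponentially thin along each cusp direction. In rank one, cusps are controlled by a single root, so the exponential decay of volume barely compensates the exponential growth of $\Lambda$ and one is restricted to polynomial integrability. In higher rank, cusps are indexed by several simple roots, providing several independent directions of exponential decay transverse to the cusp, and it is this additional room that produces exponential integrability of the cocycle.
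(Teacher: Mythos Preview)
Your proposal is correct and follows the same route the paper indicates: the paper does not give a detailed proof but simply notes that taking Dirichlet fundamental domains and invoking \cite[Lemme~5.6]{delasalleStrongPropertyHigherrank2019} yields exponential integrability, with coboundedness and the $\LL^\infty$ bound on $\beta$ following from the relative compactness of $X_\Gamma$. Your Steps~1--3 spell out exactly this argument, with the same key input at Step~3.
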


Coming back to Theorem~\ref{thmintroHyper}, observe that the $\LL^{\infty}$ condition from $\Lambda$ to $\Gamma$ is the strongest possible. 
 It can be relaxed to an $\LL^p$-type condition, but at the cost of imposing a stretched exponential integrability condition in the other direction. More precisely, we obtain:

\begin{thmintro}[{see Cor.~\ref{cor: phi coupling hyp}}]\label{thmintroHyper2}
		Let $\Gamma$ be a finitely generated hyperbolic group. For every $p>q>0$, if there is a cobounded $(\varphi,\psi)$-integrable measure equivalence coupling from $\Gamma$ to a finitely generated group $\Lambda$, where 
	$\varphi(t)=\exp(t^p)$ and $\psi(t)=t^{1+1/q}$, then $\Lambda$ is also hyperbolic.  
\end{thmintro}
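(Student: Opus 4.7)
The plan is to obtain Theorem~\ref{thmintroHyper2} as a direct specialisation of the main technical result of the paper, Theorem~\ref{thm:hyperbolicExpME}, which is the same engine driving Theorem~\ref{thmintroHyper}. That result should provide a general criterion of the form: if $\Gamma$ is $\delta$-hyperbolic with volume entropy at most $\alpha$ and if a cobounded $(\varphi,\psi)$-integrable ME coupling from $\Gamma$ to $\Lambda$ is given, then $\Lambda$ is hyperbolic provided $\varphi$ and $\psi$ satisfy a quantitative compatibility condition balancing the decay of $\varphi$ at infinity against the growth of $\psi$ and the exponential ball-volume in $\Gamma$. The two regimes in which this criterion specialises cleanly are precisely $(\LL^p,\LL^\infty)$, which gives Theorem~\ref{thmintroHyper}, and the stretched-exponential/polynomial pair, which will give Theorem~\ref{thmintroHyper2}.

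The first step is to transcribe this compatibility condition and substitute $\varphi(t)=\exp(t^p)$ and $\psi(t)=t^{1+1/q}$. The hypothesis on $\varphi$ yields, via a Chebyshev estimate, a stretched-exponential tail bound $\mu\{x:|\alpha(s,x)|_{S_\Lambda}>t\}\lesssim \exp(-ct^p)$, while the hypothesis on $\psi$ yields only a polynomial tail $\mu\{x:|\beta(t,x)|_{S_\Gamma}>r\}\lesssim r^{-(1+1/q)}$ on the reverse cocycle. To deduce hyperbolicity of $\Lambda$, the argument of Theorem~\ref{thm:hyperbolicExpME} pulls a geodesic triangle in a Cayley graph of $\Lambda$ back along $\beta$, applies $\delta$-hyperbolicity of $\Gamma$ to the resulting object, and pushes the resulting thin-triangle witness forward along $\alpha$; the cost of pulling back an edge-path of length $n$ is governed by $\psi$, while the cost of the forward push is governed by $\varphi$ combined with the exponential-volume input $\alpha$.

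The main work is to verify that the condition $p>q>0$ is exactly what makes this combined estimate tight. Since $\psi^{-1}(s)=s^{q/(q+1)}$, the composition $\psi^{-1}\circ\varphi(t)=\exp\!\bigl(\tfrac{q}{q+1}t^p\bigr)$ still dominates every polynomial in $t$, so the polynomial $\psi$ side is able to absorb arbitrary-length geodesics in $\Lambda$; the strict inequality $p>q$ produces the uniform gap needed so that, after normalising by the volume-entropy term $\alpha$ of $\Gamma$, the net contribution remains summable and the hypothesis of Theorem~\ref{thm:hyperbolicExpME} is satisfied. The principal obstacle is exactly this quantitative calibration: the geometric transfer mechanism having been established once and for all in Theorem~\ref{thm:hyperbolicExpME}, the remainder of the proof reduces to plugging our formulas into that statement and checking the resulting inequalities.
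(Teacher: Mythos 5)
Your overall strategy is right: the statement is indeed obtained by specialising Theorem~\ref{thm:hyperbolicExpME}, and you correctly compute $\psi^{-1}(s)=s^{q/(q+1)}$. But the verification of the hypotheses of that theorem is not actually carried out, and the heuristic you give for why $p>q$ suffices is not the correct mechanism, so there is a genuine gap.

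Theorem~\ref{thm:hyperbolicExpME} does not ask for a single ``compatibility condition'' balancing $\varphi$ against $\psi$; it asks you to exhibit an auxiliary non-decreasing unbounded function $r$ for which the two separate conditions~\eqref{eq:cond1} and~\eqref{eq:cond2} hold simultaneously. Condition~\eqref{eq:cond2} forces $r(n)$ to grow at least like $\psi^{-1}(3Ln)\sim n^{q/(q+1)}$, while condition~\eqref{eq:cond1}, given that $\vol_{S_\Gamma}(r(n))\le|S_\Gamma|^{r(n)}$ and $\varphi_\eps(n/r(n))=\exp(\eps^p (n/r(n))^p)$, forces roughly $r(n)\ll n^{p/(p+1)}$. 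The role of the hypothesis $p>q$ is exactly that it opens a window $\tfrac{q}{q+1}<\tfrac{p}{p+1}$ between these exponents, and the proof consists of choosing some $\eta$ with $q<\eta<p$ and taking $r(n)=n^{\eta/(1+\eta)}$, then verifying the two displayed conditions one at a time (the first via $\eta<p$, the second via $\eta>q$). Your argument instead reasons about the composition $\psi^{-1}\circ\varphi(t)=\exp(\tfrac{q}{q+1}t^p)$, which does not appear in conditions~\eqref{eq:cond1} or~\eqref{eq:cond2} and in any case always dominates polynomials regardless of whether $p>q$, so it cannot be what certifies the hypotheses; you have not shown how to choose $r$, and without that the appeal to Theorem~\ref{thm:hyperbolicExpME} is incomplete. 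A secondary issue: since $\varphi(t)=\exp(t^p)$ does not satisfy the doubling-type condition of Remark~\ref{rmk: equivalence vs similarity}, the normalising constant $\eps$ in $\varphi_\eps(t)=\varphi(\eps t)$ cannot be absorbed, and the paper's computation has to carry it through explicitly (it becomes the $\eps^p$ inside the stretched exponential); your sketch elides this. Finally, your description of the geometric engine behind Theorem~\ref{thm:hyperbolicExpME} as pulling back a geodesic triangle is inaccurate (the paper uses bi-Lipschitz embedded cycles $C_n$ and Proposition~\ref{prop:HypNotEmbed}), but since you treat that theorem as a black box this does not affect the corollary's proof.
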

\begin{remark}
This shows for instance that there does not exist $(\exp,\LL^{2+\eps})$ 
measure equivalence couplings from a hyperbolic group to a non hyperbolic group for any $\eps>0$. 
\end{remark}
Once again we deduce the following corollary for lattices in rank 1 simple Lie groups, which again contrasts with the the case of higher rank lattices.

\begin{corintro}\label{corintroLatticerank1}
	Assume that $\Gamma$ is a uniform lattice in a center-free, real rank 1 simple Lie group $G$ and $\Lambda$ is another lattice of $G$.  For every $p>q>0$, if there is a cobounded $(\varphi,\psi)$-integrable measure equivalence coupling from $\Gamma$ to $\Lambda$ where 
	$\varphi(t)=\exp(t^p)$ and $\psi(t)=t^{1+1/q}$, then $\Lambda$  is uniform as well. \end{corintro}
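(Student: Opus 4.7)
The plan is to deduce Corollary~\ref{corintroLatticerank1} as a straightforward specialization of Theorem~\ref{thmintroHyper2}, combined with the standard dichotomy between uniform and non-uniform lattices in rank one real simple Lie groups. First I will observe that the uniform lattice $\Gamma$ is Gromov hyperbolic: since $\Gamma$ acts properly and cocompactly by isometries on the associated rank one symmetric space $X=G/K$, which carries a $G$-invariant CAT$(-1)$ metric, the Švarc-Milnor lemma identifies $\Gamma$ quasi-isometrically with $X$, forcing $\Gamma$ to be hyperbolic in the sense of Gromov. This places us squarely in the hypothesis of Theorem~\ref{thmintroHyper2}.

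Next, applying Theorem~\ref{thmintroHyper2} to the given cobounded $(\varphi,\psi)$-integrable coupling from $\Gamma$ to $\Lambda$, with $\varphi(t)=\exp(t^p)$ and $\psi(t)=t^{1+1/q}$ and $p>q>0$, immediately yields that $\Lambda$ is also a word hyperbolic group. No further quantitative estimate is needed here: the entire analytic content has been black-boxed into Theorem~\ref{thmintroHyper2}, and the present deduction uses it off the shelf.

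Finally, it remains to rule out $\Lambda$ being non-uniform. For this I invoke the classical dichotomy recalled in the introduction: a non-uniform lattice in a center-free real rank one simple Lie group distinct from $\mathrm{PSL}(2,\R)$ must contain a parabolic subgroup that is virtually nilpotent but not virtually cyclic --- coming from a cusp of the finite-volume quotient $\Lambda\backslash X$, this is a copy of $\Z^2$ in the $\mathrm{SO}(n,1)$ case with $n\geq 3$, and a lattice in a Heisenberg-type nilpotent group in the $\mathrm{SU}(n,1)$, $\mathrm{Sp}(n,1)$, or $\mathrm{F}_4^{-20}$ cases. Since every elementary subgroup of a Gromov hyperbolic group is virtually cyclic, this is incompatible with the hyperbolicity of $\Lambda$ established in the previous step, forcing $\Lambda$ to be uniform. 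The only truly delicate point is the case $G=\mathrm{PSL}(2,\R)$, where all lattices --- uniform or not --- are already hyperbolic, so the conclusion does not follow from hyperbolicity alone; this exceptional case must either be excluded from the statement or handled by verifying that no cobounded coupling of the prescribed integrability between a uniform and a non-uniform lattice of $\mathrm{PSL}(2,\R)$ can exist. I expect this isolated verification to be the main obstacle in completing the corollary; the rest of the argument is formal.
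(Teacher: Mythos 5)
Your proposal takes the same route the paper intends: deduce hyperbolicity of $\Lambda$ from Theorem~\ref{thmintroHyper2}, then invoke the classical dichotomy that a non-uniform lattice in a center-free real rank one simple Lie group other than $\mathrm{PSL}(2,\R)$ contains a non-virtually-cyclic virtually nilpotent parabolic, which is incompatible with hyperbolicity. You are also right to single out $G\cong\mathrm{PSL}(2,\R)$ as the one case where hyperbolicity alone does not suffice, since there all lattices are hyperbolic. What you leave open, the paper resolves in a remark just after the corollary: by Bader, Furman and Sauer, a uniform and a non-uniform lattice in $\mathrm{SL}(2,\R)$ are not even $\LL^1$ measure equivalent. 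Since $\varphi(t)=\exp(t^p)$ and $\psi(t)=t^{1+1/q}$ with $q>0$ both dominate $t$, a cobounded $(\varphi,\psi)$-integrable coupling is in particular $\LL^1$, so the Bader--Furman--Sauer theorem rules this case out. Filling in that one citation closes the gap you flagged; otherwise your argument matches the paper's.
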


\begin{remark}
In Corollaries~\ref{corintroLatticerank1LpLinfty} and~\ref{corintroLatticerank1}, the case of  $\SL(2,\R)$ was already known and actually a much stronger conclusion holds in that case: Bader, Furman and Sauer have proved that non-uniform lattices and uniform ones are not $\LL^1$ measure equivalent.   
\end{remark}
\begin{remark}
Theorems~\ref{thmintroHyper} and~\ref{thmintroHyper2} should be compared with a theorem of Bowen saying that if there exists an $(\LL^0, \LL^1)$ orbit equivalence coupling from a virtually free group to a finitely generated accessible group $\Lambda$, then $\Lambda$ is virtually free \cite{Bowen2017}. 
\end{remark}

\paragraph{Stability of hyperbolicity under quantitative orbit equivalence}
From an orbit equivalence coupling, a measure equivalence coupling can be constructed in such a way that both groups share a common fundamental domain. In particular, such a coupling is automatically cobounded. 
Hence \cref{thmintroHyper} and \cref{thmintroHyper2} have the following immediate corollaries. 

\begin{corintro}\label{corintro: stability of hyp under LpLinf}
 Let $\Gamma$ be a finitely generated hyperbolic group. There 
	is $p>0$   such that if there exists a 
$(\LL^p,\LL^{\infty})$ orbit equivalence coupling from $\Gamma$ a finitely generated group $\Lambda$, then $\Lambda$ is also hyperbolic. 
 \end{corintro}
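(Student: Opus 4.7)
The plan is to reduce the statement directly to Theorem~\ref{thmintroHyper}. As mentioned in the paragraph preceding the corollary (and as recalled in the discussion of the definition of coboundedness in the introduction), an orbit equivalence coupling is a measure equivalence coupling in which one can arrange that the two fundamental domains coincide, namely $X_\Gamma=X_\Lambda$. With this identification, coboundedness is automatic and essentially trivial: the fundamental domain $X_\Gamma$ is contained in a single $\Lambda$-translate of $X_\Lambda$, namely itself.

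Next, I would verify that the integrability hypothesis transfers directly from the OE formulation to the ME formulation of Definition~\ref{defIntro:QuantME}. The cocycles $\alpha\colon\Gamma\times X_\Lambda\to\Lambda$ and $\beta\colon\Lambda\times X_\Gamma\to\Gamma$ associated with this measure equivalence coupling are precisely the cocycles of the underlying orbit equivalence, and the assumption of $(\LL^p,\LL^\infty)$-integrability reads identically in both formalisms: for every $s\in S_\Gamma$ the map $x\mapsto \lvert\alpha(s,x)\rvert_{S_\Lambda}$ lies in $\LL^p(X_\Lambda)$, and for every $t\in S_\Lambda$ the map $x\mapsto\lvert\beta(t,x)\rvert_{S_\Gamma}$ is essentially bounded on $X_\Gamma$. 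So the $(\LL^p,\LL^\infty)$ OE coupling yields a cobounded $(\LL^p,\LL^\infty)$ ME coupling from $\Gamma$ to $\Lambda$.

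With both hypotheses in place, one simply invokes Theorem~\ref{thmintroHyper}, with the same explicit exponent $p=108\delta\alpha+2$ produced there from the hyperbolicity constant $\delta$ and the volume entropy $\alpha$ of a Cayley graph of $\Gamma$, to conclude that $\Lambda$ must be hyperbolic. I do not expect any genuine obstacle: all the substantive work is concentrated in Theorem~\ref{thmintroHyper}, and the corollary is merely the observation that OE couplings are automatically cobounded ME couplings with the same integrability data.
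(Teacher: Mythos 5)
Your proposal is correct and matches the paper's approach: the paper explicitly states just before the corollary that an orbit equivalence coupling gives a measure equivalence coupling with a common fundamental domain $X_\Gamma = X_\Lambda$, which makes coboundedness automatic, and then the corollary follows immediately from Theorem~\ref{thmintroHyper}. Your observation that the cocycles and the $(\LL^p,\LL^\infty)$-integrability data carry over unchanged is exactly what is needed, and the explicit exponent you quote agrees with the paper's remark.
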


\begin{corintro}
Let $\Gamma$ be a finitely generated hyperbolic group. For every $p>q>0$, if there is a $(\varphi,\psi)$-integrable orbit equivalence coupling from $\Gamma$ to  a finitely generated group $\Lambda$ where 
	$\varphi(t)=\exp(t^p)$ and $\psi(t)=t^{1+1/q}$, then $\Lambda$ is also hyperbolic. 
\end{corintro}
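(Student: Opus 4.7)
The plan is to derive this from Theorem~\ref{thmintroHyper2} essentially for free, by noting that any orbit equivalence coupling is a particular instance of a cobounded measure equivalence coupling. Recall that an OE coupling is defined as a measure equivalence coupling in which one can choose $X_\Gamma = X_\Lambda$; call this common set $X$.

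Given this, the verification of coboundedness is trivial: since $X_\Gamma = X_\Lambda = X$, we have $X_\Gamma = e_\Lambda \actom X_\Lambda$, so $X_\Gamma$ is contained in a single $\Lambda$-translate of $X_\Lambda$ (and symmetrically the coupling is mutually cobounded). Hence any $(\varphi,\psi)$-integrable OE coupling is automatically a cobounded $(\varphi,\psi)$-integrable ME coupling with the same integrability data, since the cocycles $\alpha$ and $\beta$ are defined in exactly the same way and the integrals are taken over the same fundamental domains.

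With this reformulation in hand, the conclusion is immediate: assuming the hypotheses of the corollary, we obtain a cobounded $(\varphi,\psi)$-integrable ME coupling from the hyperbolic group $\Gamma$ to $\Lambda$ with $\varphi(t) = \exp(t^p)$ and $\psi(t) = t^{1+1/q}$ for some $p > q > 0$. Theorem~\ref{thmintroHyper2} then directly yields that $\Lambda$ is hyperbolic. Since the entire argument reduces to observing that OE implies coboundedness, there is really no obstacle to overcome here; the whole content sits in Theorem~\ref{thmintroHyper2} itself.
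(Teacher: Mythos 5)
Your proof is correct and takes essentially the same approach as the paper: an orbit equivalence coupling is, in the paper's terminology, a measure equivalence coupling with $X_\Gamma = X_\Lambda$, hence automatically cobounded, so \cref{thmintroHyper2} applies immediately with the same integrability data.
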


 %We do not know how to avoid this condition even when both cocycles are assumed to be bounded. 

\paragraph{Hyperbolicity and embedded cycles.}

The proofs of Theorem~\ref{thmintroHyper}  and Theorem~\ref{thmintroHyper2} are based 
on the following characterization of {\it non}-hyperbolic spaces. We denote by $C_n$ the cyclic graph of length $n$.

\begin{theoremStatement}[{\cite[Prop.~5.1]{humePoorlyConnectedGroups2020}}]\label{thmIntro:Nonhyperbolic}
	Let $X$ be a connected graph that is not hyperbolic. Then for all $n\in \N$, there exists a 18-bi-Lipschitz embedded cyclic
subgraph in $X$ of length at least $n$.\end{theoremStatement}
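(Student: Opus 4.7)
The plan is to leverage the equivalent formulation of Gromov hyperbolicity in terms of thin geodesic triangles: $X$ is hyperbolic if and only if there exists $\delta$ such that every geodesic triangle in $X$ is $\delta$-thin. Hence $X$ not being hyperbolic means that for every $R > 0$ one can find a geodesic triangle $\Delta = [a,b] \cup [b,c] \cup [a,c]$ in $X$ together with a point $w$ on one side, say $[a,b]$, whose distance to the union of the other two sides exceeds $R$. The idea is that such a ``fat'' triangle forces a large embedded cycle: one walks around the hole in its middle.

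To build the cycle, fix the target length $n$, choose $R$ of order $n$ (with a multiplicative constant to be calibrated) together with a small parameter $\rho$, and take the corresponding fat geodesic triangle. Walking along $[a,b]$ from $a$ to $b$, the function $d(\cdot, [a,c])$ begins at $0$ and $d(\cdot, [b,c])$ ends at $0$, while at the fat point $w$ both exceed $R$. Let $u$ be the last point on $[a,b]$ before $w$ with $d(u, [a,c]) = \rho$ and let $v$ be the first point after $w$ with $d(v, [b,c]) = \rho$; choose $u' \in [a,c]$ and $v' \in [b,c]$ realising these distances. Concatenate four arcs into a closed loop: the subgeodesic of $[a,b]$ from $u$ to $v$; a geodesic from $v$ to $v'$ of length $\rho$; the broken geodesic $[v', c] \cup [c, u']$; and a geodesic from $u'$ back to $u$ of length $\rho$. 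After shortcutting any self-intersections one obtains an embedded cycle $C$ whose length is at least of order $R$, and hence at least $n$ for $R$ large enough.

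The heart of the proof is the bi-Lipschitz estimate: one must show that $d_C(p,q) \leq 18 \cdot d_X(p,q)$ for every $p,q \in C$. When $p$ and $q$ lie on the same one of the four subarcs of $C$, this is automatic since each piece is a geodesic of $X$. The nontrivial case is when they lie on distinct subarcs: a short $X$-path between them would either violate the geodesicity of one of the three sides of $\Delta$, or would create a new path from the fat middle of $[a,b]$ to $[a,c] \cup [b,c]$ of length less than $R$, contradicting the defining fatness of the triangle. The main obstacle is calibrating $\rho$ relative to $R$ in order to pin down the bi-Lipschitz constant at exactly $18$: choosing $\rho$ too small makes certain points on the two transverse arcs close in $X$ yet far along $C$, while choosing $\rho$ too large makes the transverse arcs dominate and skews the ratios of cycle-length to $X$-distance in the other direction. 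The explicit value $18$ should drop out of optimising this trade-off, taking $\rho$ proportional to $R$.
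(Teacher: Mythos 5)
First, note that the paper you are reading does not actually prove this statement: it is quoted as Proposition 5.1 of Hume--Mackay--Tessera, ``Poorly connected groups,'' and used as a black box. So I can only assess your sketch on its own merits, not against the paper's proof.

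Your starting point (the Rips thin-triangle characterization) and the general shape of the construction are reasonable, but the sketch has a genuine gap centred on the arc $\gamma_3 = [v',c] \cup [c,u']$. You assert that when $p,q$ lie on the same one of the four subarcs the bi-Lipschitz bound is ``automatic since each piece is a geodesic of $X$.'' But $\gamma_3$ is not a geodesic, it is a concatenation of two geodesics meeting at $c$. Nothing prevents $[a,c]$ and $[b,c]$ from running parallel and close to each other (say at distance $1$) over a very long stretch before diverging near $a$ and $b$. In that configuration two points $p \in [v',c]$ and $q \in [c,u']$ at the same ``height'' satisfy $d_X(p,q) \approx 1$ while $d_C(p,q)$ is enormous, and the bi-Lipschitz bound fails with no constant. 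Worse, the length of $\gamma_3$ is $d(v',c)+d(c,u')$, which is completely uncontrolled in terms of $R$: the vertex $c$ can be arbitrarily far from the fat point $w$, so the cycle you build can have length $\gg R$, and one side of it is an arbitrarily long near-backtracking path.

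Your proposed fix is ``after shortcutting any self-intersections one obtains an embedded cycle $C$ whose length is at least of order $R$,'' but this is exactly where the difficulty lies, and it is not justified. Shortcutting the near-backtracking of $\gamma_3$ replaces the detour through $c$ by something close to a geodesic from $v'$ to $u'$, but such a geodesic is no longer constrained to stay inside $[a,c]\cup[b,c]$, so the lower bound $d(\gamma_1, \gamma_3) > \rho$ that you were relying on (it came from the choice of $u,v$ at the $\rho$-level of $d(\cdot,[a,c]\cup[b,c])$) is lost: the shortcut can pass arbitrarily close to $w$. You therefore cannot simultaneously guarantee the length lower bound and the bi-Lipschitz upper bound after shortcutting without an additional, nontrivial idea. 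The remark that ``the explicit value $18$ should drop out of optimising this trade-off'' is not a proof step; the difficulty is not in tuning $\rho$, it is that the quantity you want to control (closeness of $\gamma_3$ to itself and to $\gamma_1$ after surgery) is not controlled by $\rho$ at all. A correct argument along these lines needs to either replace the triangle by a carefully chosen geodesic \emph{bigon} (Papasoglu's criterion) and track distances between the two parallel strands much more precisely, or choose the level-set points and the closing arcs by a minimality/extremality argument that forecloses the parallel-tracks scenario. As it stands, the proposal identifies the right picture but does not contain the key mechanism that makes the bi-Lipschitz constant come out finite, let alone equal to $18$.
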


The strategy of proof consists in confronting this result with the following one (a very close statement is proved in~\cite{verbeekMetricEmbeddingHyperbolic2014} for the real hyperbolic space).

\begin{thmintro}[see Corollary~\ref{cor:HypNotEmbed}]\label{thmIntro:NonhyperbolicQuant}
	Let $a\geq 0$, $b\geq 1$, and $\delta\geq 1$. There is an integer $n_0=n_0(a,b)\geq 2$ such that the following holds. For all  $\delta$-hyperbolic geodesic space $X$, for all $n\geq n_0$, if there is a  map $\varphi\colon C_n\to X$ such that for all $x,y\in C_n$
	\[ad_{C_n}(x,y) \leq d(\varphi(x),\varphi(y)) \leq bd_{C_n}(x,y)\]
	then we have 
\begin{equation}\label{eq:hypcycle'}
a< 6\delta\cdot  \frac{\log n}{n}.
\end{equation}
\end{thmintro}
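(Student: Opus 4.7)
The plan is to apply a bisection (logarithmic bottleneck) argument inside the hyperbolic target $X$. The starting point is a discrete bottleneck lemma: if $p_0,\ldots,p_N$ satisfies $d(p_i,p_{i+1})\le b$ in a $\delta$-hyperbolic geodesic space, then every point $y$ on a geodesic $[p_0,p_N]$ lies within distance $\delta\lceil\log_2 N\rceil+b/2$ of some $p_i$. I would prove this by induction on $\lceil\log_2 N\rceil$: the $\delta$-thinness of the triangle $p_0\,p_{\lfloor N/2\rfloor}\,p_N$ brings $y$ within $\delta$ of one of the two sides, after which one recurses on the corresponding half of the discrete path, which has at most $\lceil N/2\rceil$ segments.

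I would then apply the lemma, with $p_i=\varphi(x_i)$, to both half-cycles of the image of $C_n$, relative to an antipodal pair $(p_0,p_{n/2})$ and the midpoint $m$ of a geodesic $[p_0,p_{n/2}]$. Setting $d:=d(p_0,p_{n/2})\ge an/2$, this yields indices $i\in[0,n/2]$ and $j\in[n/2,n]$ with $d(p_i,m),d(p_j,m)\le r:=\delta\log_2 n+b/2$, so $d(p_i,p_j)\le 2r$. Combining the triangle inequality at $m$ with both the upper and the lower Lipschitz bounds on $d(p_0,p_i),d(p_{n/2},p_i)$ (and the analogous ones for $p_j$) localizes $i$ and $j-n/2$ to narrow sub-intervals of $[0,n/2]$. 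By the cyclic symmetry of the two half-cycles across $m$, these sub-intervals are mirror images of each other, forcing the cyclic distance $d_{C_n}(x_i,x_j)$ to be of order $n$. The lower bi-Lipschitz inequality $a\cdot d_{C_n}(x_i,x_j)\le d(p_i,p_j)\le 2r$ then unwinds into a relation of the form $a\cdot n\le C(\delta\log n+b)$ for some explicit constant $C$, and choosing $n_0(a,b)$ large enough to absorb the $b/n$ contribution into the main $\delta\log n/n$ term (which is uniform in the hypothesis $\delta\ge 1$) produces the claimed bound $a<6\delta\log n/n$.

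The main technical obstacle is the simultaneous localization of $i$ and $j$ in the argument above. Using only one of the two bi-Lipschitz constants produces intervals that are too wide to force $d_{C_n}(x_i,x_j)\gtrsim n$, so that a naive version of the argument, or an appeal to the four-point condition on $p_0,p_{n/4},p_{n/2},p_{3n/4}$ alone, only delivers a constant-order bound of the type $a\le b/2+O(\delta\log n/n)$. Recovering the clean $\log n/n$ rate requires using both $a$ and $b$ at once, together with the cyclic symmetry exchanging the two half-cycles through $m$; matching the specific numerical constant $6$ in the statement is then a matter of careful bookkeeping in the induction underlying the bottleneck lemma and in the combining step.
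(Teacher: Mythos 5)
Your overall plan — reduce to a logarithmic bottleneck lemma for $\delta$-hyperbolic spaces and then play the two half-cycles against a geodesic joining two antipodal image points — is the paper's plan, and your bottleneck lemma is essentially the paper's \cref{lem:distanceGeoPath}. However, the combinatorial core of your argument has a genuine gap, and it is exactly the step you flag as ``the main technical obstacle.''

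You fix a \emph{single} point $y=m$ on $[p_0,p_{n/2}]$ and try to pin down the indices $i\in[0,n/2]$ and $j\in[n/2,n]$ with $d(p_i,m),d(p_j,m)\le r$. But the constraints coming from the triangle inequality at $m$ together with \emph{both} Lipschitz bounds only give $(d/2-r)/b\le i\le (d/2+r)/a$ and $(d/2-r)/b\le n/2-i\le (d/2+r)/a$, where $d:=d(p_0,p_{n/2})$. These do \emph{not} localize $i$ to a narrow interval when $d$ is near its minimum $an/2$: in that regime $(d/2+r)/a \approx n/4+r/a$ can be of order $n$, so the admissible interval for $i$ is a positive fraction of $[0,n/2]$, and the mirror-image interval for $n-j$ is equally wide. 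Consequently $d_{C_n}(x_i,x_j)$ can be forced to be only of order $(an-O(r))/b$, and the final inequality degrades to something like $a^2 n \lesssim b\,r$, i.e.\ $a=O(\sqrt{\delta\log n/n})$, not the claimed $O(\delta\log n/n)$. The cyclic symmetry ``through $m$'' does not rescue this: it only says the two intervals are reflections of each other, which does nothing to shrink them. So the statement ``forcing the cyclic distance $d_{C_n}(x_i,x_j)$ to be of order $n$'' is not justified — and it is false in the worst case.

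The paper avoids this with a sweep / intermediate-value argument that you do not have. Instead of localizing $z_m,z'_m$ for one $m$, it considers, for \emph{every} $y\in[p_0,p_{n/2}]$, the pair $(z_y,z'_y)$ of cycle-vertices on the two half-cycles that are $\le r$-close to $y$, and observes that the cycle geodesic from $z_y$ to $z'_y$ must pass through $a_1$ or through $a_2$; it passes through $a_1$ when $y$ is near $p_0$ and through $a_2$ when $y$ is near $p_{n/2}$. Taking two adjacent parameters $y_1,y_2$ on either side of the crossover, the four cyclic distances $d(z_{y_1},z_{y_2})$, $d(z_{y_2},z'_{y_2})$, $d(z'_{y_2},z'_{y_1})$, $d(z'_{y_1},z_{y_1})$ sum \emph{exactly} to the full cycle length, hence one of them is $\ge n/2$, while the corresponding image distance is $\le 2r+\eps$. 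This telescoping identity is what replaces (and supersedes) any attempt to localize a single pair $(i,j)$, and it is what yields the linear-in-$n$ lower bound without any assumption that $d(p_0,p_{n/2})$ is large. To repair your proof you need this sweeping/crossover idea; the single-point version cannot be made to work by choosing constants more carefully.
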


\paragraph{Sketch of proof.}

We prove Theorem~\ref{thmintroHyper} and Theorem~\ref{thmintroHyper2} by contradiction. Let us briefly sketch the argument for an orbit equivalence coupling. We assume that $\Lambda$ is non hyperbolic and consider  the map  $\varphi_n\colon C_n\to \Lambda$ provided for some large integer $n$ by Theorem~\ref{thmIntro:Nonhyperbolic}. Identifying the orbits of $\Lambda$ and $\Gamma$, we obtain for a.e.\ $x\in X$, a map $\psi_{n,x}\colon C_n\to \Gamma$. By exploiting the integrability condition from $\Lambda$ to $\Gamma$, we obtain a bound on the Lipschitz constant of $\psi_{n,x}$, which is satisfied on a subset of $X$ of sufficiently large measure. Observe that this step is trivial under the hypotheses of Theorem~\ref{thmintroHyper}, as the $L^{\infty}$-condition ensures that $\psi_{n,x}$ is Lipschitz,  uniformly with respect to $x\in X$. 
The more subtle part of the argument consists in estimating the Lipschitz constant of the inverse of $\psi_{n,x}$, in order to obtain a contradiction with Theorem~\ref{thmIntro:NonhyperbolicQuant}.

In case of a measure equivalence coupling, a difficulty arises in the second step of the proof: in order to estimate the Lipschitz constant of the inverse of $\psi_{n,x}$, 
we need to go back to $\Lambda$ and exploit the lower bound $\frac{1}{18}$ on the Lipschitz constant of the inverse of $\varphi_{n}$. This where the coboundedness condition comes in, allowing us to relate
 the fundamental domains of the two groups.

\paragraph{Further remarks and questions.}
As commented above, the coboundedness assumption plays an important (though technical) role in the proofs. It would be interesting to know whether it can be avoided. In particular, this raises the following question.
\begin{que}
	Assume $n\geq 3$, and let $\Gamma$ (resp.\ $\Lambda$) be a uniform (resp.\ non-uniform) lattice in SO$(n,1)$. For what values of $p\in [1,\infty]$ are $\Gamma$  and $\Lambda$ $\LL^p$ measure equivalent? \end{que}
Finally, let us recall once more the following result of Sauer: any two amenable finitely generated groups are quasi-isometric if and only if they admit an $\LL^{\infty}$ measure equivalence coupling which is mutually cobounded.  
In general, it is unknown whether being $\LL^{\infty}$ measure equivalent implies being quasi-isometric, even for amenable groups. 
This justifies the following question.

\begin{que}
	Is hyperbolicity invariant under $\LL^{\infty}$ measure equivalence?
\end{que}

\paragraph{Plan of the paper.}

Theorem~\ref{thmIntro:NonhyperbolicQuant} is first proved in \S~\ref{sec:geometricprelim}. 
After recalling the definitions of quantitative measure equivalence in \S~\ref{sec: prelim quant},
we prove our main result, namely Theorem~
\ref{thm:hyperbolicExpME} from which we deduce the theorems announced in the 
introduction.

\section{Geometric preliminaries}\label{sec:geometricprelim}

Let $X$ be a graph, a \textbf{discrete path} in $X$ of length $l\geq 1$ is a map $\alpha\colon \{0,\ldots,l\}\to X$ such that for all $i\in\{0,\ldots,l-1\}$, we have that $\alpha(i)$ and $\alpha(i+1)$ are connected by an edge.  We will also say that $\alpha$ is a path from $\alpha(0)$ to $\alpha(l)$, and we will often identify a path to its range. 

Every connected graph $X$ is viewed as a metric space $(X,d)$ equipped with the \textbf{discrete path metric}, defined by setting $d(x,y)$ as the minimum length of a path from $x$ to $y$. Any discrete path which realizes the discrete path metric between two points is called a \textbf{discrete geodesic}, and it is then an isometric embedding from $\{0,\ldots,d(x,y)\}$ to $(X,d)$. 

Another important metric space that we can get out of a connected graph $X$ is given by the (continuous) \textbf{path metric} which we define as in  \cite[1.15$_+$]{gromovMetricStructuresRiemannian2007}. 
We first identify each edge to the interval $[0,1]$ isometrically, thus obtaining a \emph{length structure} on our graph. The metric associated to this length structure is denoted by $d_l$, and it is by definition the continuous path metric on $X$. It agrees with the discrete path metric on the vertices of $X$, and it is geodesic. Every geodesic between vertices defines a discrete geodesic, and every discrete geodesic can be lifted to a geodesic between vertices.\\

A (geodesic) \textbf{triangle} in a metric space $(X,d)$ with \textbf{vertices} $a_1,a_2,a_3 \in X$ is a set $[a_1,a_2,a_3] \subseteq X$ obtained by taking the union of a choice of geodesics $[a_1,a_2]$, $[a_2,a_3]$, and $[a_3,a_1]$ between its vertices. In the same way, we define a (geodesic) \textbf{$n$-gon} with vertices $a_1, \dots, a_n \in X$, and denote it by $[a_1, \dots, a_n]$. Given an $n$-gon where $n \geq 3$, we will frequently call any of its defining geodesics a \textbf{side}. 

Now, recall that a geodesic space $(X,d)$ is \textbf{$\delta$-hyperbolic} in the sense of Rips if there exists a $\delta \geq 0$ such that for every geodesic triangle $[a_1,a_2,a_3]$ and for every $x \in [a_1,a_2]$, there exists an element $y$ in either $[a_1,a_3]$ or $[a_2,a_3]$ such that $d(x,y)\le \delta$; or equivalently, that the side $[a_1,a_2]$ is contained in the $\delta$-neighborhood of $[a_1,a_3]\cup[a_2,a_3]$.
Moreover, we say that a geodesic space $(X,d)$ is \textbf{hyperbolic} if it is $\delta$-hyperbolic for some $\delta$, and that a finitely generated group $\Gamma$ with generating set $S$ is hyperbolic whenever its Cayley graph %$\mathrm{Cay}(\Gamma,S)$ 
is hyperbolic when equipped with the continuous path metric.

We shall need the following well-known lemma.

\begin{lemma}\label{lem:distanceGeoPath}
Let $X$ be a  $\delta$-hyperbolic geodesic space, let  $\alpha$ be a path of length $\ell\geq 1$ between two points $x_1$ and $x_2$, and let $y$ belong to a geodesic from $x_1$ to $x_2$. Then 
\[d(y,\alpha)\leq \delta\log_2(\ell)+1.\] 
\end{lemma}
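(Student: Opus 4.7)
The plan is to prove this by a dyadic subdivision argument, which is the standard way to deduce a logarithmic "fellow-traveler" bound from the Rips thin-triangle condition.

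First, I would parametrize $\alpha$ by arclength as a continuous path $\alpha \colon [0, \ell] \to X$ (lifting the discrete path if necessary via the continuous path metric), so that its midpoint $m = \alpha(\ell/2)$ is at well-defined arc-distance $\ell/2$ from each endpoint. Pick geodesic segments $[x_1, m]$ and $[m, x_2]$ in $X$, forming a geodesic triangle together with $[x_1, x_2]$. Since $X$ is $\delta$-hyperbolic (Rips condition), the point $y$ lies within $\delta$ of some point $y'$ on $[x_1, m] \cup [m, x_2]$; without loss of generality, $y' \in [x_1, m]$.

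Now I iterate: the point $y'$ lies on a geodesic between two points $\alpha(0)$ and $\alpha(\ell/2)$ that are joined by a subpath of $\alpha$ of length $\ell/2$. Applying the same construction to this shorter subpath, I obtain $y''$ at distance at most $\delta$ from $y'$ and lying on a geodesic between two points of $\alpha$ joined by a subpath of length $\ell/4$. After $k$ iterations, I produce a point $y_k$ with
\[
d(y, y_k) \leq k \delta
\]
that lies on a geodesic between two points $p, q$ of the image of $\alpha$ connected by a subpath of length $\ell / 2^k$. In particular $d(p, q) \leq \ell/2^k$, so the midpoint $y_k$ of this geodesic satisfies $d(y_k, p) \leq \ell / 2^{k+1}$.

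Choosing $k = \lceil \log_2 \ell \rceil$ makes $\ell/2^{k+1} \leq 1/2 \leq 1$, so $y_k$ is within distance $1$ of a point on $\alpha$. Combining,
\[
d(y, \alpha) \leq d(y, y_k) + d(y_k, \alpha) \leq \delta \log_2(\ell) + 1,
\]
which is the claimed bound. I do not foresee a substantive obstacle; the only minor care needed is ensuring that, at each step, one tracks a point that lies on a geodesic between two points of $\alpha$ joined by a subpath of controlled length (so that the inductive setup is reproducible), and handling the base case $\ell \leq 1$ where the statement holds trivially since then $y$ and any point on $\alpha$ are within $1$ of each other along $[x_1,x_2]$.
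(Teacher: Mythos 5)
Your approach is essentially the same as the paper's: dyadic halving of $\alpha$ combined with the Rips thin-triangle condition. The paper phrases it as a backward induction on $\lfloor\ell\rfloor$, finding at each step a subdivision point $x_3$ on $\alpha$ for which both sides of the resulting split satisfy the $\delta$-closeness condition, while you unroll the same idea as a forward iteration always splitting at the midpoint; both variants are valid and the content is identical.

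There is, however, a bookkeeping slip that makes the final inequality not follow from the argument as written. With $k = \lceil\log_2\ell\rceil$ you have $d(y,y_k)\leq k\delta = \lceil\log_2\ell\rceil\,\delta$, which strictly exceeds $\delta\log_2\ell$ whenever $\ell$ is not a power of $2$; for instance with $\ell=3$ and $\delta$ large, your bound $k\delta + 1/2 = 2\delta + 1/2$ is bigger than the target $\delta\log_2 3 + 1\approx 1.58\,\delta+1$. The fix is to take $k = \lfloor\log_2\ell\rfloor$ instead: then $\ell/2^{k} < 2$, so $y_k$ (which lies on the geodesic $[p_k,q_k]$ joining two points of $\alpha$ with $d(p_k,q_k)\leq\ell/2^k < 2$) is within $\ell/2^{k+1} < 1$ of one of its two endpoints, and you conclude $d(y,\alpha)\leq \lfloor\log_2\ell\rfloor\,\delta + 1 \leq \delta\log_2\ell + 1$. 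Also note that $y_k$ need not be the midpoint of $[p_k,q_k]$; the relevant (and correct) observation is simply that any point on a geodesic of length $d$ lies within $d/2$ of one of the two endpoints, since the two distances sum to $d$.
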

\begin{proof}
Let us prove it by induction on $\lfloor\ell\rfloor$. The case $\lfloor \ell\rfloor=1$ is clear.  So assume $n\geq 2$ and suppose that the lemma is true for all paths of length $<n$. Let $\alpha$ be a path of length $\ell\in[n,n+1[$ from $x_1$ to $x_2$, represented as a gray path in the following figure. For every $x\in \alpha,$ using that the geodesic triangle $[x_1,x,x_2]$ is $\delta$-thin, then either $d(y,[x_1,x])]\leq \delta$ or $d(y,[x,x_2])]\leq \delta$. By connectedness of $\alpha$, there exists $x_3$ such that both conditions are satisfied. By exchanging $x_1$ and $x_2$ if necessary, we can assume that the portion $\alpha_1$ of $\alpha$ from $x_1$ to $x_3$ has length $\ell_1\leq \ell/2$. 

%\begin{figure}[H]
%	\label{fig:distanceGeoPath}
	\begin{center}
	\includegraphics[scale=1]{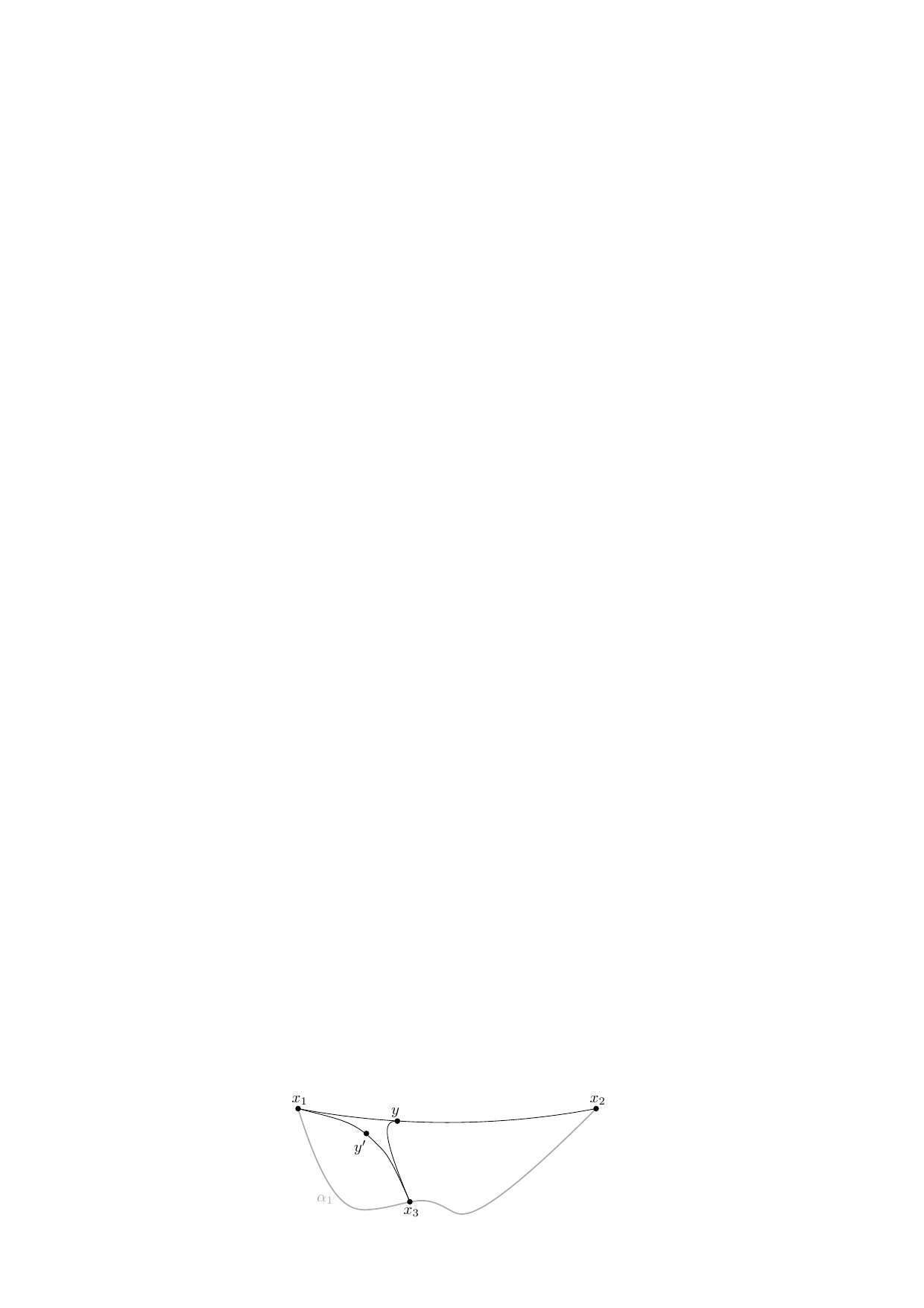}
	\end{center}
%	\caption{An example of path $\alpha$}
%\end{figure}

Hence there exists a point $y'\in [x_1,x_3]$ such that $d(y,y')\leq \delta$.  
Now applying the induction hypothesis to the path $\alpha_1$ and the point $y'\in[x_1,x_3]$, we obtain 
\[
d(y',\alpha_1)\leq \delta\log_2(\ell_1)\leq \delta\log_2\ell+1 -\delta. 
\]
We deduce by the triangle inequality 
\[
d(y,\alpha)\leq d(y,y')+ d(y',\alpha_1)\leq    \delta\log_2(\ell)+1.
\]
So we are done.
\end{proof}
%$A$, $B$ and $C$ in $X$, with geodesics $\geod(A,B)$, $\geod(A,C)$ and $\geod(B,C)$, and every $x$ on $\geod(A,B)$ there exists an element $y$ in either $\geod(A,C)$ or $\geod(B,C)$ such that $d(x,y)\le \delta$ or equivalently $\geod(A,B)$ is contained in the $\delta$-neighbourhood of $\geod(A,C)\cup\geod(B,C)$. 

First we need an alternative definition of hyperbolicity in terms of embedded cycles. In what follows, the \textbf{cycle} $C_n$ of length $n\geq 2$ is the Cayley graph of $\mathbb{Z} / n \mathbb{Z}$ with respect to the generating set containing only the element $1$ mod $n$, which we view as a \emph{discrete} metric space denoted  by $(C_n, d_{C_n})$.%Note that a cycle of length $1$ is just a point, contrary to what the terminology might suggest.

\begin{proposition}\label{prop:HypNotEmbed}
	Let $(X,d)$ be a $\delta$-hyperbolic geodesic space, let $n$ be a positive integer. Then for every $a\geq 0$ and every $b\geq 1$,  if there is a  map $\varphi\colon C_{2n}\to X$ such that  for every $x,y\in C_{2n},$
	\[ad_{C_{2n}}(x,y)  \leq d(\varphi(x),\varphi(y)) \leq bd_{C_{2n}}(x,y)\]
	  then we have 
\begin{equation*}\label{eq:hypcycle}
a\leq \frac{4\delta\log_2(bn)+4+2b}{n}
\end{equation*}
\end{proposition}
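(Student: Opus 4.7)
The plan is a midpoint argument based on Lemma~\ref{lem:distanceGeoPath}. Let $\gamma$ be a geodesic in $X$ from $\varphi(0)$ to $\varphi(n)$; it has length $d(\varphi(0),\varphi(n))\geq an$ by the bilipschitz lower bound, so its midpoint $m$ satisfies $d(m,\varphi(0))=d(m,\varphi(n))\geq an/2$. The cycle $C_{2n}$ decomposes as two arcs of cycle-length $n$ from $0$ to $n$; concatenating in $X$ the geodesic segments joining consecutive images $\varphi(i),\varphi(i+1)$ yields two continuous paths $\alpha_1,\alpha_2$ from $\varphi(0)$ to $\varphi(n)$, each of length at most $bn$. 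Applying Lemma~\ref{lem:distanceGeoPath} to $\gamma$ with each $\alpha_i$ produces points $p_i\in\alpha_i$ with $d(m,p_i)\leq\delta\log_2(bn)+1$. Since each $p_i$ lies on a segment of length at most $b$, passing to a closer endpoint gives vertices $\varphi(k_1),\varphi(k_2)$ with $k_1\in\{0,\dots,n\}$, $k_2\in\{n,\dots,2n\}$ (mod $2n$) and $d(p_i,\varphi(k_i))\leq b/2$. Two applications of the triangle inequality then give
\[
d(\varphi(k_1),\varphi(k_2))\,\leq\,2\delta\log_2(bn)+2+b.
\]

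Combined with the bilipschitz lower bound $d(\varphi(k_1),\varphi(k_2))\geq a\,d_{C_{2n}}(k_1,k_2)$, the sought inequality $a\leq(4\delta\log_2(bn)+4+2b)/n$ reduces to the claim $d_{C_{2n}}(k_1,k_2)\geq n/2$. To establish this, I use the midpoint property: the chain
\[
bk_1\,\geq\,d(\varphi(0),\varphi(k_1))\,\geq\,d(m,\varphi(0))-d(m,\varphi(k_1))\,\geq\,\tfrac{an}{2}-\delta\log_2(bn)-1-\tfrac{b}{2}
\]
gives a lower bound on $k_1$, and the symmetric estimates obtained by replacing $\varphi(0)$ by $\varphi(n)$ and working on $\alpha_2$ provide analogous bounds for $n-k_1$, $k_2-n$ and $2n-k_2$. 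These force $k_1$ and $k_2$ into central sub-intervals of their respective arcs, from which a direct computation of the cyclic distance yields $d_{C_{2n}}(k_1,k_2)\geq n/2$.

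The main obstacle will lie precisely in making this last deduction uniform: the naive use of the inequalities above only places $k_1$ squarely in $[n/4,3n/4]$ when $a$ is already comparable to $b$. In the residual regime the argument should be completed either by exploiting the freedom in choosing $p_i$ inside $\alpha_i\cap B(m,\delta\log_2(bn)+1)$ so as to land on segments of the central portion of each arc, or by observing that if every admissible choice of $p_i$ is forced near an endpoint of $\alpha_i$, then the triangle inequality $d(m,\varphi(0))\leq d(m,p_i)+d(p_i,\varphi(k_i))+d(\varphi(k_i),\varphi(0))$ already delivers the target bound on $a$ directly, without needing the full strength of $d_{C_{2n}}(k_1,k_2)\geq n/2$.
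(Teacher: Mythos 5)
Your setup is correct through the inequality $d(\varphi(k_1),\varphi(k_2))\le 2\delta\log_2(bn)+2+b$, and you correctly reduce the proposition to $d_{C_{2n}}(k_1,k_2)\ge n/2$. The gap is exactly where you flag it, and neither fall-back closes it. Set $M=\delta\log_2(bn)+1+b/2$, so that $d(m,\varphi(k_i))\le M$. Your estimates give $k_1,\,n-k_1,\,k_2-n,\,2n-k_2\ge (an/2-M)/b$, which pins $k_1,k_2$ in the central halves of their arcs only when $(an/2-M)/b\ge n/4$, i.e.\ $a\ge b/2+2M/n$; but that is essentially the negation of the inequality being proved once $n$ is large, so this case is vacuous. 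Your second fall-back, $an/2\le d(m,p_1)+d(p_1,\varphi(k_1))+d(\varphi(k_1),\varphi(0))\le M+bk_1$, gives $a\le 2M/n+2bk_1/n$, which delivers the target bound only when $k_1$ is of order $M/b$. The range of $k_1$ strictly between $M/b$ and $n/4$ (non-empty as soon as $n$ is large compared with $M$) is covered by neither alternative, and the first fall-back is unjustified: nothing forces $\alpha_i$ to approach $m$ along a vertex from the middle of its arc. One can see this concretely by folding $C_{2n}$ onto a tripod with three legs of length $n/3$: the midpoint of $[\varphi(0),\varphi(n)]$ then produces $k_1,k_2$ with $d_{C_{2n}}(k_1,k_2)=n/3$. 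Pushing your estimates all the way, $a\,d_{C_{2n}}(k_1,k_2)\le 2M$ together with $d_{C_{2n}}(k_1,k_2)\ge (an-2M)/b$ and $a\le b$ only yields $a\le 2\sqrt{bM/n}$, which is strictly weaker than the required $a\le 4M/n$ once $n$ is large.

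The paper's proof is designed precisely to avoid this obstruction, and it differs from yours at exactly this step. Rather than fixing a single point, it lets $y$ sweep over the whole geodesic $[\varphi(a_1),\varphi(a_2)]$ with $a_1,a_2\in C_{2n}$ antipodal, records for each $y$ approximating vertices $z_y,z'_y$ on the two arcs, and uses the fact that every $C_{2n}$-geodesic from $z_y$ to $z'_y$ must pass through $a_1$ or through $a_2$. Near one endpoint of the geodesic the first alternative holds, near the other the second; connectedness of the geodesic then yields $y_1,y_2$ arbitrarily close at which the alternative flips. The four vertices $z_{y_1},z_{y_2},z'_{y_2},z'_{y_1}$ wind once around $C_{2n}$, so one of the four consecutive cyclic gaps is at least $n/2$, while the corresponding pair of $\varphi$-images is within $2M+\eps$ of each other. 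This intermediate-value step, which certifies that the preimages are cyclically far apart, is the idea that a single fixed midpoint cannot supply.
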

Before proving the above proposition, let us note the following straightforward corollary, 
using the estimate $\frac 1{\log 2}< \frac32$.
\begin{corollary}\label{cor:HypNotEmbed}
	Let $a\geq 0$, $b\geq 1$. There is an integer $n_0=n_0(b)\geq 2$ such that the following holds. For all
	$n\geq n_0$ and all $\delta\geq 1$, if $X$ is a  $\delta$-hyperbolic geodesic space and if there is a  map $\varphi\colon C_n\to X$ such that for all $x,y\in C_n$
	\[ad_{C_n}(x,y) \leq d(\varphi(x),\varphi(y)) \leq bd_{C_n}(x,y)\]
	then we have 
\begin{equation}\label{eq:hypcycle'bis}
a< 6\delta\cdot  \frac{\log n}{n}.
\end{equation}
\end{corollary}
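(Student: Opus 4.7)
The corollary is an essentially cosmetic consequence of Proposition~\ref{prop:HypNotEmbed}. My plan is to apply the proposition and simplify using the hint $\frac{1}{\log 2} < \frac{3}{2}$, then absorb the lower-order terms into the choice of $n_0$.

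First, I need to reconcile the fact that Proposition~\ref{prop:HypNotEmbed} is stated for $C_{2n}$ (even length) while the corollary allows arbitrary $n$. For $n$ even, I apply the proposition directly with its parameter equal to $n/2$. For $n$ odd, I would precompose $\varphi$ with the natural $(1, 1+\tfrac{1}{n-1})$-bi-Lipschitz injection $C_{n-1}\hookrightarrow C_n$ obtained by identifying $C_{n-1}$ with $n-1$ consecutive vertices of $C_n$, and apply the proposition to the resulting map on $C_{n-1}$. This only perturbs the bi-Lipschitz constants of $\varphi$ by a factor $1 + O(1/n)$, which is negligible in the asymptotic regime $n\to\infty$.

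Second, I apply Proposition~\ref{prop:HypNotEmbed} to obtain an inequality of the form
\[
a \leq \frac{4\delta\, \log_2\bigl(b\lfloor n/2\rfloor\bigr) + 4 + 2b}{\lfloor n/2\rfloor}.
\]
Using $\log_2 t = \log t/\log 2 < \tfrac{3}{2}\log t$, the leading coefficient $4$ in front of $\delta\log_2$ turns into a bound with coefficient $6$ in front of $\delta\log$. Expanding $\log(b\lfloor n/2\rfloor) = \log b + \log\lfloor n/2\rfloor$ and using $\delta\geq 1$, the additive constants $4+2b$, the $\log b$ contribution, and the discrepancy between $\lfloor n/2\rfloor$ and $n$ all contribute an error of order $O_b(1/n)$, which is strictly dominated by the leading $6\delta\log n/n$ term once $n \geq n_0(b)$ is chosen large enough.

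The main obstacle is not conceptual but purely the constant bookkeeping: one must verify that the threshold $n_0$ can be chosen to depend only on $b$ and not on $\delta$, $a$, or the ambient space $X$. This is straightforward because all $\delta$-dependent error terms scale linearly in $\delta$ (hence are dominated by the $\delta \log n$ growth of the right-hand side for large $n$), while the $b$-dependent corrections are uniformly bounded in $\delta$, so that a single threshold $n_0(b)$ suffices to ensure the strict inequality uniformly in $\delta\geq 1$.
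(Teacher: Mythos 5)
There is a genuine gap: your accounting of the factor of $2$ coming from the mismatch between $C_n$ and $C_{2n}$ is incorrect, and fixing it changes the constant. After applying Proposition~\ref{prop:HypNotEmbed} with parameter $m=\lfloor n/2\rfloor$, you obtain $a\leq \bigl(4\delta\log_2(bm)+4+2b\bigr)/m$. Replacing $4\log_2$ by $6\log$ via $\frac1{\log 2}<\frac32$ gives $a<\bigl(6\delta\log(bm)+4+2b\bigr)/m$. But $m\approx n/2$, so $\frac{6\delta\log m}{m}\approx \frac{12\delta\log n}{n}$: the change of denominator from $n$ to $\lfloor n/2\rfloor$ is a \emph{multiplicative} factor of $\sim 2$ on the entire bound, not an additive $O_b(1/n)$ error as you claim. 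With your route the final estimate is roughly $12\delta\log n/n$, which is too weak to conclude $a<6\delta\log n/n$. A secondary (smaller) inaccuracy: the inclusion $C_{n-1}\hookrightarrow C_n$ you propose for odd $n$ distorts distances by a factor up to $2$ (e.g.\ $d_{C_{n-1}}(0,n-2)=1$ while $d_{C_n}(0,n-2)=2$), not by $1+\tfrac1{n-1}$; so the odd-to-even reduction also costs more than claimed, though this only affects $b$ and is therefore harmless.

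What the paper actually does (and what the hint $\frac1{\log2}<\frac32$ is tailored for) is to read the proposition as giving $a\leq\bigl(4\delta\log_2(bn)+4+2b\bigr)/n$ with $n$ the cycle length, exactly as the inequality is later invoked in the proof of Theorem~\ref{thm:hyperbolicExpME} ($na_n>4\delta\log_2(b_nn)+4+2b_n$ ``contradicts'' the proposition). Under that reading, $4/\log2\approx5.77<6$ leaves a strictly positive margin $(6-\tfrac4{\log2})\delta\log n$, and since $\delta\geq 1$ the additive terms $4\delta\log_2 b+4+2b$ are absorbed once $n\geq n_0(b)$, yielding the stated $a<6\delta\log n/n$. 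Your proof is more literal about the $C_{2n}$ in the proposition's hypothesis, but that literal reading exposes a factor-of-$2$ mismatch between the proposition and both the corollary and its later use — which you then paper over with an incorrect $O(1/n)$ claim rather than resolve. To produce a correct write-up you should either (a) align with the paper's implicit reading of the proposition (cycle length $n$ in both statements), or (b) keep the literal reading of $C_{2n}$ and accept that the resulting corollary constant is $12$ rather than $6$, flagging the discrepancy.
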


In order to prove the proposition, we need the following additional notion. Given a discrete path $\beta$ in a graph $Y$ and a map $\varphi\colon Y\to(X,d)$ where $(X,d)$ is geodesic, we say that a continuous path $\alpha$ in $X$ is a $\varphi$\textbf{-direct image} of $\beta$ if it is obtained by concatenating geodesics between $\varphi(\beta(i))$ and $\varphi(\beta(i+1))$ where $i$ ranges from $0$ to $\ell(\beta)-1$.

\begin{proof}[Proof of \cref{prop:HypNotEmbed}]
	Let $a_1,a_2\in C_{2n}$ be such that $d_{C_{2n}}(a_1,a_2)=n$ and define $x_1=\varphi(a_1)$, $x_2=\varphi(a_2)$. Consider a geodesic $[x_1,x_2]$ from $x_1$ to $x_2$. In $C_{2n}$ there are two discrete geodesic paths from $a_1$ to $a_2$, both with length $n$. Denote by  $\alpha$ and $\alpha'$ some respective $\varphi$-direct images of those paths in $X$, which by assumption have length at most $bn$. Let $y\in [x_1,x_2]$, we deduce from \cref{lem:distanceGeoPath} that 
	\[d(y,\alpha)\leq \delta\log_2(\ell(\alpha))+1\leq \delta\log_2\left(bn\right)+1,\]
	 where $\ell(\alpha)$ is the length of $\alpha$, and by the same argument $d(y,\alpha')\le \delta\log_2(bn)+1$.
	If we then pick $z_y$ and $z'_y$ points in $C_{2n}$ such that their $\varphi$-images are in $\alpha$ and $\alpha'$ respectively and minimize the distance to $y$, we have
	\begin{equation}\label{ineq: dist phizy}
	\max\left(d(y,\varphi(z_y)),d(y,\varphi(z'_y)\right)\leq \delta\log_2(bn)+1+\frac b2.
	\end{equation}
	
	Note that for any $y\in[x_1,x_2]$, any geodesic from $z_y$ to $z'_y$ must pass through $x_1$ or through $x_2$. Moreover there are some $y\in[x_1,x_2]$ for which the first case occurs, and some for which the second case occurs. For all $\eps>0$, we may thus find $y_1,y_2\in[x_1,x_2]$ such that $d(y_1,y_2)\le \eps$, the geodesic from $z_{y_1}$ to $z'_{y_1}$ passes through $x_1$ and the geodesic from $z_{y_2}$ to $z'_{y_2}$ passes through $x_2$. 
	
	\begin{center}
		\includegraphics[scale=1]{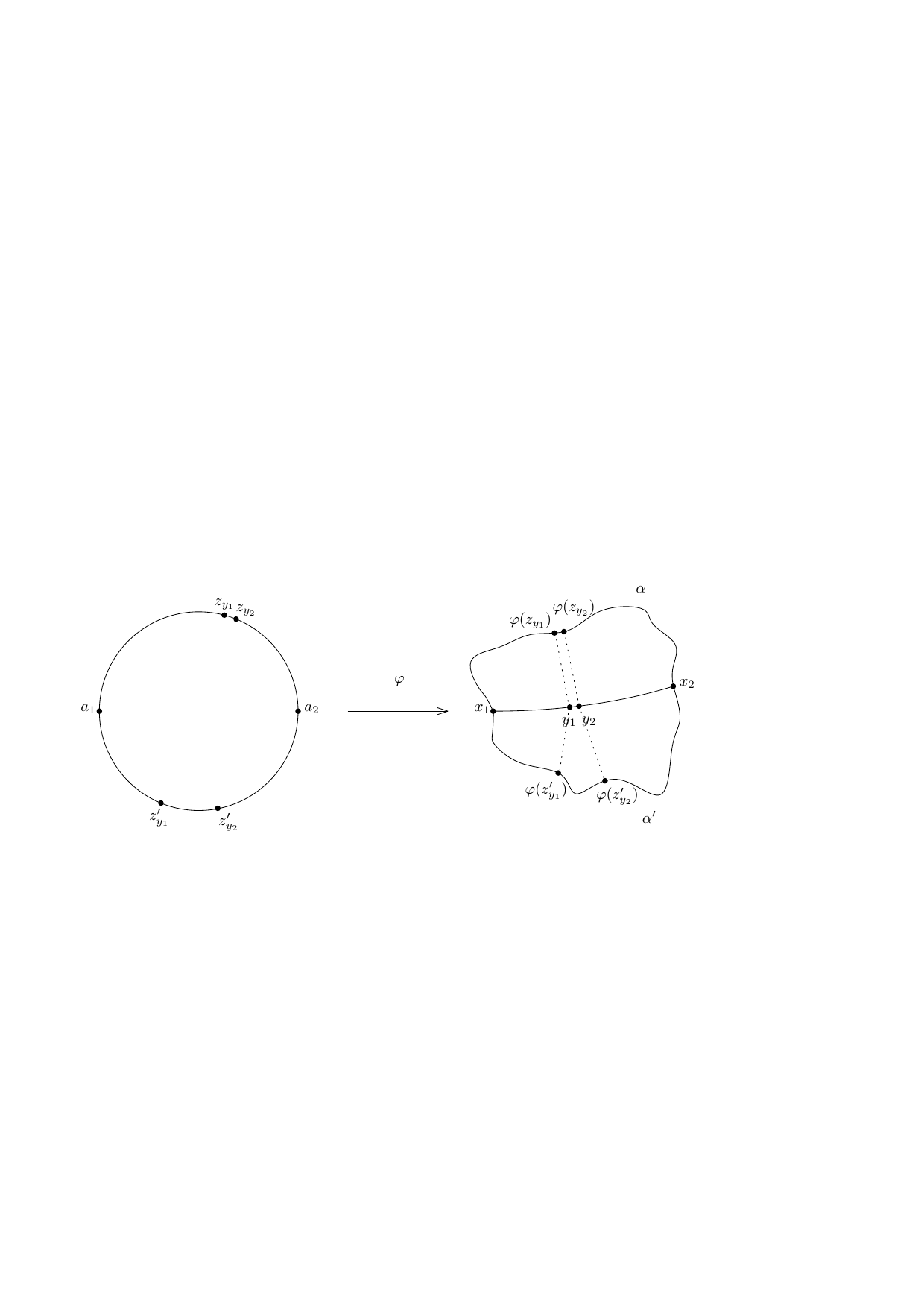}
	\end{center}
	Then we have that \[d_{C_n}(z_{y_1},z_{y_2})+d_{C_n}(z_{y_2},z'_{y_2})+d_{C_n}(z'_{y_2},z'_{y_1})+d_{C_n}(z'_{y_1},z_{y_1})= 2n.\]
	Hence one of these four distances is at least $\frac{n}{2}$. On the other hand, combining \eqref{ineq: dist phizy}  and the fact that $d(y_1,y_2)\leq\eps$, we obtain the following inequality
	\[d(\varphi(z_{y_1}),\varphi(z_{y_2}))+d(\varphi(z_{y_2}),\varphi(z'_{y_2}))+d(\varphi(z'_{y_2}),\varphi(z'_{y_1}))+d(\varphi(z'_{y_1}),\varphi(z_{y_1}))\leq 2(\delta\log_2\left(bn\right)+1+\frac b 2)+\eps.\] Using our assumption on $\varphi$, we thus have the following inequality: for all $\eps>0$,
	\begin{align*}
		\frac{an}{2}&\leq 2\delta\log_2\left(bn\right)+2+b +\eps.
	\end{align*}
	So the proposition follows.
\end{proof}
 
\section{Preliminaries on quantitative measure equivalence}\label{sec: prelim quant}

We now make the statements from the introduction more precise
by 
enriching a bit the terminology from
\cite[Sec.~2]{delabieQuantitativeMeasureEquivalence2022}
and introducing in details quantitative measure equivalence.
We denote systematically smooth actions
by $\actom$ (recall that by definitions, smooth actions
are those which admit a Borel fundamental domain, i.e.~a Borel
subset intersecting each orbit exactly once).

\subsection{Relations between fundamental domains}\label{sec: rel between fund dom}

In this section, we fix a smooth measure-preserving action 
$\Gamma\act (\Omega,\mu)$.

Let $X_1$, $X_2$ be two fundamental domains, we denote by 
$\pi_{X_1,X_2}:X_1\to X_2$ the map which takes every $x\in X_1$
to the unique $x'\in X_2\cap \Gamma\actom x$.
The map $\pi_{X_1,X_2}$ belongs to the pseudo full group of the action, 
in particular it is measure-preserving, and its inverse is
$\pi_{X_2,X_1}$. 

Say that two fundamental domains $X_1$ and $X_2$ are 
$\LL^\infty$-equivalent if there is a finite subset $F\Subset\Gamma$
such that for all $x\in X_1$, there is $\gamma\in F$ such that 
such that $\pi_{X_1,X_2}(x)=\gamma\actom x$. 
Observe that $\LL^\infty$-equivalence is an equivalence relation, 
and that if some measure-preserving $T\in \Aut(\Omega,\mu)$ 
commutes with the $\Gamma$-action, then $X_1$ is $\LL^\infty$-equivalent
to $X_2$ iff $T(X_1)$ is $\LL^\infty$-equivalent to $T(X_2)$. 

Now let $\varphi: \R^+\to\R^+$ be a non-decreasing function
and assume that $\Gamma$ is generated by a finite set $S_\Gamma$, allowing 
us to endow $\Omega$ with the Schreier metric $d_{S_\Gamma}$ 
whose definition is recalled in~\cite[Def.~2.14]{delabieQuantitativeMeasureEquivalence2022}.
For the purpose of this paper, 
we introduce some further terminology and say that two 
fundamental domains $X_1,X_2$ are $\varphi$-\textbf{similar}
if 
\[
\int_{X_1}\varphi(d_{S_\Gamma}(x,\pi_{X_1,X_2}(x)))d\mu(x)<+\infty.
\]
Observe that $\LL^\infty$-equivalence can be recast as the fact that the map
$x\in X_1\mapsto d_{S_\Gamma}(x,\pi_{X_1,X_2}(x))$ takes only finitely 
many values, in particular it implies $\varphi$-similarity. 
However, as the name suggests, $\varphi$-similarity is not an equivalence relation 
in general, for instance when $\varphi(t)=e^t$ transitivity might fail
(see also \cite[Rem.~2.16]{delabieQuantitativeMeasureEquivalence2022}).
Even worse, it is a priori dependent on the choice of the finite 
generating set 
$S_\Gamma$ we made. 

In order to correct this, we introduce as in 
\cite{delabieQuantitativeMeasureEquivalence2022} a coarser relation
that we called $\varphi$-equivalence: two fundamental domains $X_1$ and $X_2$
are $\varphi$\textbf{-equivalent} if there is some $\delta>0$ such that they 
are $\varphi_\delta$-similar, where $\varphi_\delta(t)=\varphi(\delta t)$.
We checked in \cite[Cor.~2.19]{delabieQuantitativeMeasureEquivalence2022}
that $\varphi$-equivalence is an equivalence relation. 

\begin{remark}\label{rmk: equivalence vs similarity}
    It is important to note that \(\varphi\)-equivalence is equivalent to \(\varphi\)-similarity
when \(\varphi\) satisfies that for all \(c>0\), there is \(C>0\) such that for all \(t\geq 0\) we have \(\varphi(ct)\leq C\varphi(t)\). This is notably the case
when \(\varphi(t)=t^p\) for some \(p>0\), and we will make use of this fact
without explicit mention. 
\end{remark}

\subsection{\texorpdfstring{$(\varphi,\psi)$}{phi,psi}-integrable measure equivalence}\label{sec:quantme}

In this section we fix two finitely generated groups $\Gamma=\la S_\Gamma\ra$ 
and $\Lambda=\la S_\Lambda\ra$. 

As recalled in the introduction, a \textbf{measure equivalence coupling} from $\Gamma$ to $\Lambda$
is a measured 
space $(\Omega,\mu)$ endowed with commuting smooth free measure-preserving actions
of $\Gamma$ and $\Lambda$ and Borel 
fundamental domains $X_\Gamma$ for the $\Gamma$-action, $X_\Lambda$ for the $\Lambda$-action,
which both have finite measure. 
We write such couplings as $(\Omega,\mu,X_\Gamma,X_\Lambda)$.

\begin{definition}\label{def: phi psi meq cocycle free}
	Let $\varphi,\psi:\R^+\to\R^+$ be two non-decreasing functions.
	We say that a measure-equivalence coupling $(\Omega,\mu,X_\Gamma,X_\Lambda)$ from $\Gamma$ to $\Lambda$ is
	$(\varphi,\psi)$\textbf{-integrable} if for every $\gamma\in S_\Gamma$
	the $\Lambda$-fundamental domain $\gamma\actom X_{\Lambda}$ is $\varphi$-equivalent to $X_\Lambda$
	, and for every 
	$\lambda\in S_\Lambda$ the $\Gamma$-fundamental domain $\lambda\actom X_\Gamma$ is $\psi$-equivalent to $X_\Gamma$.
\end{definition}

As explained in \cite[Prop.~2.22]{delabieQuantitativeMeasureEquivalence2022}, since $\varphi$-equivalence and $\psi$-equivalence are equivalence relations,
the definition is unchanged if we quantify over all $\gamma\in\Gamma$ or all $\lambda\in\Lambda$ rather than over the finite generating sets $S_\Gamma$ and 
$S_\Lambda$.

We now connect this to the cocycle version of these definitions, already given in the introduction:  given a measure equivalence coupling $(\Omega,\mu,X_\Gamma,X_\Lambda)$, we
have 
the associated cocycles 
\[
\alpha:X_\Lambda\times \Gamma\to\Lambda\text{ and } \beta: X_\Gamma\times \Lambda\to\Gamma
\]
uniquely defined by the following statements: 
for all $x\in X_\Lambda$ and $\gamma\in\Gamma$, 
$\alpha(\gamma,x)\actom (\gamma\actom  x)\in X_\Lambda$, 
and similary for all $x\in X_\Gamma$ and $\lambda\in\Lambda$,
$\beta(\lambda,x)\actom(\lambda\actom x)\in X_\Gamma$. 

When we endow $\Gamma$ and $\Lambda$ with the natural norms $\abs\cdot_{S_\Gamma}$
and $\abs\cdot_{S_\Lambda}$ associated to 
their respective generating sets $S_\Gamma$ and $S_\Lambda$, we can now state 
that a measure equivalence coupling $(\Omega,\mu,X_\Gamma,X_\Lambda)$ from $\Gamma$ to $\Lambda$ is $(\varphi,\psi)$\textbf{-integrable} iff the
associated cocycles satisfy: there is $\delta>0$ such that for all $s\in S_\Gamma$ and $t\in S_\Lambda$ we have
\[
\int_{X_\Lambda} \varphi(\delta \abs{\alpha(s,x)}_{S_\Lambda})d\mu(x)<+\infty\quad \text{ and }\quad \int_{X_\Lambda} \psi(\delta\abs{\beta(t,x)}_{S_\Gamma})d\mu(x)<+\infty.
\]

\begin{remark}As in the end of the previous section, note that if \(\varphi\) satisfies that for all \(c>0\), there is \(C>0\) such that for all \(t\geq 0\) we have \(\varphi(ct)\leq C\varphi(t)\), then we can get rid of the factor 
\(\delta\) in the first inequality, and the same applies to the second inequality
mutatis mutandis. This applies in particular for \(\varphi(t)=t^p\), and thus 
for \(\LL^p\) conditions, but not for exponential integrability.
\end{remark}

Let us denote by $\cdot$ the natural $\Gamma$ (resp.~$\Lambda$) action on $X_\Lambda$ (resp.~$X_\Gamma$) given by : for all $\gamma\in\Gamma$ and $x\in X_\Lambda$, $\gamma\cdot x$ is the unique element of $\Lambda\actom(\gamma\actom x)\cap X_\Lambda$ (and symmetrically for all $\lambda\in\Lambda$ and all $x\in X_\Gamma$, $\lambda\cdot x$ is the unique element of $\Gamma\actom(\lambda\actom x)\cap X_\Gamma$). Observe that for all $x\in X_\Lambda$ and $\gamma\in\Gamma$, the cocycle \(\alpha(\gamma,x)\) is uniquely defined by the equation
$\alpha(\gamma,x)\actom (\gamma\actom  x)=\gamma\cdot x$, and a similar statement holds for \(\beta\).

The equivalence between the above definition and Definition~\ref{def: phi psi meq cocycle free} is then clear once one notes that 
$\alpha$ is connected to the cocycle of $\pi_{X_\Lambda,\gamma\actom X_\Lambda}$: it is uniquely defined by the equation
\[\pi_{X_\Lambda,\gamma\actom X_\Lambda}(x)=\alpha(\gamma,\gamma\inv\cdot x) \actom x.\]
Indeed it follows that $d_{S_\Gamma}(x,\pi_{X_\Lambda,\gamma\actom X_\Lambda}(x))=\abs{\alpha(\gamma,\gamma\inv\cdot x)}_{S_\Gamma}$, so since the action of $\gamma\inv$ on $X_\Lambda$ is measure-preserving, the $\varphi$-integrability of $x\mapsto d_{S_\Gamma}(x,\pi_{X_\Lambda,\gamma\actom X_\Lambda}(x))$ is equivalent to that of $x\mapsto \abs{\alpha(\gamma,x)}_{S_\Gamma}$ (and the symmetric phenomenon holds for $\beta$).

Finally, let us recall the cocycle relations satisfied by \(\alpha\) (and \(\beta\) mutatis mutandis): for all \(x\in X_\Lambda\) and all \(\gamma,\gamma'\in\Gamma\) we have
\[
\alpha(\gamma'\gamma,x)=\alpha(\gamma',\gamma\cdot x)\alpha(\gamma,x).
\]

\section{Rigidity of hyperbolicity}\label{sec:rigidityhyper}

We now prove rigidity results, saying that hyperbolicity is preserved under cobounded measure equivalence couplings satisfying certain integrability conditions. To simplify the exposition, we choose to state two results, but the second one should be seen as a degenerate version of the first one (where one of the conditions becomes an $L^{\infty}$-condition). Anyway the two results share basically the same proof, which consists in confronting \cref{prop:HypNotEmbed} with the fact that 
non hyperbolic graphs contain arbitrarily large $18$-bi-Lipschitz embedded cycles \cite[Prop.~5.1]{humePoorlyConnectedGroups2020}. 
Since our two results are quite technical, we shall first deduce two more appealing corollaries (namely \cref{thmintroHyper} and \cref{thmintroHyper2} from the introduction). 

Let us start with some notation: given a group $\Gamma$ equipped with a symmetric finite generating subset $S_\Gamma$, we denote the growth function $\vol_{S_\Gamma}(n)=|S_\Gamma^n|$ and define the associated 
\textbf{volume entropy}  as $\ent(S_\Gamma)=\limsup_{r\to \infty} \frac{\log (\vol_{S_\Gamma}(r))}{r}$.
\begin{theorem} \label{thm:hyperbolicExpME}
	Let $\Gamma$ and $\Lambda$ be two finitely generated groups such that $\Gamma$ is $\delta$-hyperbolic.
	We let $L\geq 1$ and $\varphi$, $\psi$ and $r$ be non-decreasing unbounded functions. Assume that the following conditions are satisfied:
	\begin{equation}\label{eq:cond1}
	\lim_{n\to \infty}\frac{n^2r(n)\vol_{S_\Gamma}(r(n))}{\varphi(n/r(n))}= 0;
	\end{equation}
	and\footnote{The constant 18 in (\ref{eq:cond2}) is the bi-Lipschitz constant from \cite[Prop.~5.1]{humePoorlyConnectedGroups2020}, while the right-hand term comes from the use of \cref{prop:HypNotEmbed}.} for all large enough $n$,
	\begin{equation}\label{eq:cond2}
	\frac{r(n)}{18}\ge 4(\delta+1)\log_2 n+3\psi^{-1}(3Ln).
	\end{equation}	 
	Assume that $(\Omega,X_\Gamma,X_\Lambda,\mu)$ 
    is a cobounded measure equivalence coupling from 
	$\Gamma$ to $\Lambda$, normalized so that $\mu(X_\Gamma)=1$, and such that 
	the associated cocycles $\alpha\colon \Gamma\times X_\Lambda\to \Lambda$ 
	and $\beta\colon \Lambda\times X_\Gamma \to \Gamma$ satisfy the following 
	properties. 
	\begin{itemize}
		\item[(i)] for all $s\in S_\Gamma$,	\[\int_{X_\Lambda} \varphi(|\alpha(s,x)|_{S_\Lambda}) d\mu(x)< \infty;\]
		\item[(ii)]  for all $t\in S_\Lambda$
		\[\int_{X_\Gamma} \psi(|\beta(t,x)|_{S_\Gamma})d\mu(x)\leq L.\]
	\end{itemize}
	Then $\Lambda$ is hyperbolic.
	\end{theorem}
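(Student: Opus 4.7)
The plan is a proof by contradiction, combining the characterization of non-hyperbolic graphs via $18$-bi-Lipschitz embedded cycles from \cite[Prop.~5.1]{humePoorlyConnectedGroups2020} with Corollary~\ref{cor:HypNotEmbed} applied to the $\delta$-hyperbolic Cayley graph of $\Gamma$. Assume for contradiction that $\Lambda$ is not hyperbolic; for every large $n$ this produces an $18$-bi-Lipschitz map $\varphi_n \colon C_n \to \Lambda$. For each $x \in X_\Gamma$ I transport it to
\[
\psi_{n,x} \colon C_n \to \Gamma, \qquad \psi_{n,x}(i) := \beta(\varphi_n(i), x),
\]
and aim to exhibit a positive-measure set on which $\psi_{n,x}$ is bi-Lipschitz with constants that Corollary~\ref{cor:HypNotEmbed} forbids.

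\textbf{Upper Lipschitz bound.} For adjacent $i, j \in C_n$ the cocycle identity expresses $\psi_{n,x}(j)\psi_{n,x}(i)^{-1}$ as a product of at most $18$ factors of the form $\beta(t, w)$ with $t \in S_\Lambda$ and $w \in X_\Gamma$. A Markov estimate on hypothesis~(ii), combined with a union bound over the at most $18 n$ such factors arising along the cycle, yields a set $A_n \subset X_\Gamma$ of measure $\geq 2/3$ on which each factor has $\Gamma$-length $\leq r(n)$; on $A_n$, $\psi_{n,x}$ is therefore $(18 r(n))$-Lipschitz, and hypothesis (\ref{eq:cond2}) is precisely what makes this measure large.

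\textbf{Lower Lipschitz bound.} This is the key step, where coboundedness enters. Fix a finite $F \subset \Lambda$ with $X_\Gamma \subset \bigcup_{\mu \in F} \mu \actom X_\Lambda$ and a measurable selector $y \mapsto \mu_y \in F$ satisfying $\mu_y^{-1} \actom y \in X_\Lambda$. Setting $y_i := \psi_{n,x}(i) \actom \varphi_n(i) \actom x \in X_\Gamma$ and $z_i := \mu_{y_i}^{-1} \actom y_i \in X_\Lambda$, a short commutation computation (using that $\Gamma$ and $\Lambda$ commute on $\Omega$) yields
\[
\alpha(\gamma_{ij}, z_i) = \nu_{ij}, \qquad \gamma_{ij} := \psi_{n,x}(j)\psi_{n,x}(i)^{-1}, \ \nu_{ij} := \mu_{y_j}^{-1} \varphi_n(j) \varphi_n(i)^{-1} \mu_{y_i},
\]
and the $18$-bi-Lipschitz property of $\varphi_n$ gives $|\nu_{ij}|_{S_\Lambda} \geq d_{C_n}(i,j)/18 - 2D$ with $D := \max_{\mu \in F}|\mu|_{S_\Lambda}$. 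To turn this into a lower bound for $|\gamma_{ij}|_{S_\Gamma}$, I expand $\gamma_{ij}$ as the concatenation of the edge-steps $\psi_{n,x}(k+1)\psi_{n,x}(k)^{-1}$ (each of $\Gamma$-length $\leq 18 r(n)$ on $A_n$) and iterate the $\alpha$-cocycle block by block: if every intermediate point $v$ so produced satisfies $|\alpha(s, v)|_{S_\Lambda} \leq n/r(n)$ for every $s \in S_\Gamma$, then $|\nu_{ij}|_{S_\Lambda} \leq (n/r(n)) \cdot |\gamma_{ij}|_{S_\Gamma}$. All such intermediate points lie in the $\Gamma$-balls of radius $18 r(n)$ centred at $z_0, \dots, z_{n-1}$, so a Markov estimate on (i) together with a union bound of size $\lesssim n^2 r(n) \vol_{S_\Gamma}(r(n))$ (after summing over pairs $(i,j)$) produces, thanks to (\ref{eq:cond1}), a set $B_n$ with $\mu(B_n) \geq 2/3$ on which the lower bound $|\gamma_{ij}|_{S_\Gamma} \geq (d_{C_n}(i,j)/18 - 2D)\cdot r(n)/n$ holds simultaneously for all pairs $i, j$.

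\textbf{Conclusion and main obstacle.} On the positive-measure set $A_n \cap B_n$, $\psi_{n,x}$ is bi-Lipschitz from $C_n$ into $\Gamma$ with lower constant $a \gtrsim r(n)/(18 n)$ (valid for pairs $(i,j)$ with $d_{C_n}(i,j) \geq n/4$, once $n \gg D$) and upper constant $b = 18 r(n)$. Corollary~\ref{cor:HypNotEmbed} applied to the $\delta$-hyperbolic $\Gamma$ then forces $a < 6 \delta \log n/n$ for $n$ large, hence $r(n) \lesssim \delta \log n$, which contradicts (\ref{eq:cond2}). The main obstacle is the lower Lipschitz bound, and more specifically the control of the volume factor in the union bound: a naive geodesic expansion of $\gamma_{ij}$ directly in $\Gamma$ would sweep a ball of radius $\sim n r(n)$ and thus produce $\vol_{S_\Gamma}(n r(n))$ intermediate points, far too many for (\ref{eq:cond1}) to absorb. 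Threading the expansion through the vertices $z_0, \dots, z_{n-1}$ of $C_n$ brings the relevant ball radius down to $18 r(n)$, matching (\ref{eq:cond1}) exactly. A secondary technicality---namely that the additive defect $-2D$ degenerates the lower bound at small scales---is harmless because the proof of Proposition~\ref{prop:HypNotEmbed} (underlying Corollary~\ref{cor:HypNotEmbed}) really only uses the bi-Lipschitz condition at scale $\sim n$.
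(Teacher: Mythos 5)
Your transport of the cycle to $\Gamma$ via $\beta$ and your handling of coboundedness by a selector are structurally sound (the identity $\alpha(\gamma_{ij},z_i)=\nu_{ij}$ does check out, using commutation of the two actions), and the latter is a reasonable alternative to the paper's reduction to $X_\Gamma\subseteq X_\Lambda$ via an auxiliary coupling. However, there are two genuine gaps in the quantitative core.

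First, the upper Lipschitz constant. Taking it to be $18r(n)$ cannot work: since this grows with $n$, you cannot apply Corollary~\ref{cor:HypNotEmbed} (its threshold $n_0(b)$ depends on $b$, which makes the appeal circular), and Proposition~\ref{prop:HypNotEmbed} bounds $a$ by $(4\delta\log_2(bn)+4+2b)/n$, a quantity that involves $b$; with $a=r(n)/(18n)$ and $b=18r(n)$ the inequality $na\le 4\delta\log_2(bn)+4+2b$ is trivially true (the term $2b=36r(n)$ alone already exceeds $na=r(n)/18$), so no contradiction arises. The upper constant must instead be $b_n=\psi^{-1}(3Ln)$: a single Markov application to hypothesis~(ii) at threshold $\psi^{-1}(3Ln)$, union-bounded over the $n$ edges, gives a set of measure $\ge 2/3$ on which $b_x$ is $\psi^{-1}(3Ln)$-Lipschitz on $C_n$, and condition~(\ref{eq:cond2}) is calibrated precisely so that $na_n=r(n)/18\ge 4(\delta+1)\log_2 n+3b_n$ eventually beats $4\delta\log_2(b_nn)+4+2b_n$. (A small point on the way: the cycle map is $1$-Lipschitz, so each edge of $C_n$ yields one $\beta$-factor, not $18$.)

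Second, the lower bound. The displayed implication is false: threading the cocycle expansion through the vertices expresses $\nu_{ij}$ as a product of $\sum_k|\gamma_{k+1,k}|_{S_\Gamma}$ factors of the form $\alpha(s,\cdot)$, so what you actually obtain is $|\nu_{ij}|\le(n/r(n))\sum_k|\gamma_{k+1,k}|$, and $\sum_k|\gamma_{k+1,k}|\ge|\gamma_{ij}|$ by the triangle inequality — the inequality goes the wrong way, and the path length can be as large as $\sim nr(n)$. The paper's fix (Claim~\ref{claim:hyperbolic}) replaces the union bound over $x$-dependent intermediate points by a union bound over group elements together with a pigeonhole: for each $\gamma$ with $|\gamma|_{S_\Gamma}\le R$, on $A_\gamma=\{x:b_x(u)^{-1}b_x(v)=\gamma^{-1}\}$ one has $\alpha(\gamma,u^{-1}\cdot x)=v^{-1}u$, whose $\Lambda$-length is the fixed number $d_{S_\Lambda}(u,v)$, so at least one generator step of a geodesic for $\gamma$ must have $\alpha$-image of length $\ge d_{S_\Lambda}(u,v)/|\gamma|$; Markov then gives $\mu(A_\gamma)\le K|\gamma|/\varphi(d_{S_\Lambda}(u,v)/|\gamma|)$, and summing over the $\vol_{S_\Gamma}(R)$ elements $\gamma$ of the ball of radius $R$ yields exactly the estimate that matches condition~(\ref{eq:cond1}). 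This is the missing idea: the relevant finite set to sum over is a ball in $\Gamma$, not a family of intermediate points along paths that move with $x$.
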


Note that our assumption on the coupling is a slight strengthening of $(\varphi,\psi)$-integrability where 
the multiplicative constant \(\delta>0\) is taken to be equal to \(1\).
Assuming that $\beta$ is bounded, we have the following variant.
\begin{theorem} \label{thm:hyperbolicExpMELinfty}
	Let $\Gamma$ and $\Lambda$ be two finitely generated groups such that $\Gamma$ is $\delta$-hyperbolic.
	We let  $\varphi$ and $r$ be increasing unbounded functions. Assume that 
(\ref{eq:cond1}) holds, and that for all large enough $n$,
	\begin{equation}\label{eq:cond2'}
	\frac{r(n)}{18}\geq 6 (\delta+1) \log n.
	\end{equation}	
	Assume that $(\Omega,\mu)$ is a cobounded measure equivalence coupling from
	$\Gamma$ to $\Lambda$, normalized so that $\mu(X_\Gamma)=1$, and such that 
	the associated cocycles $\alpha\colon \Gamma\times X_\Lambda\to \Lambda$ 
	and $\beta\colon \Lambda\times X_\Gamma \to \Gamma$ satisfy the following 
	properties. 
	\begin{itemize}
		\item[(i)]\label{item:cond1'} for all $s\in S_\Gamma$,	\[\int_{X_\Lambda} \varphi(|\alpha(s,x)|_{S_\Lambda}) d\mu(x)< \infty;\]
		\item[(ii)]\label{item:cond2'} for all $t\in S_\Lambda$ 
		\[|\beta(t,\cdot)|_{S_\Gamma}\in \LL^{\infty}(X_\Gamma).\]
	\end{itemize}
	Then $\Lambda$ is hyperbolic.	
	\end{theorem}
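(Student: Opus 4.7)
The plan is to argue by contradiction along the strategy outlined in the introduction, specializing to the \(\LL^\infty\) setting, which simplifies the bookkeeping on the \(\Gamma\to\Lambda\) side. Assume \(\Lambda\) is not hyperbolic; by \cite[Prop.~5.1]{humePoorlyConnectedGroups2020} there is, for every \(n\), a \(18\)-bi-Lipschitz map \(\varphi_n\colon C_n\to\Cay(\Lambda,S_\Lambda)\). For \(x\in X_\Gamma\) I would transport it into \(\Gamma\) by setting \(\psi_{n,x}(k):=\beta(\varphi_n(k)\varphi_n(0)^{-1},x)\); the cocycle identity for \(\beta\) gives \(\psi_{n,x}(k)\psi_{n,x}(j)^{-1}=\beta(\varphi_n(k)\varphi_n(j)^{-1},\varphi_n(j)\varphi_n(0)^{-1}\cdot x)\). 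Setting \(K:=\max_{t\in S_\Lambda}\bigl\lVert|\beta(t,\cdot)|_{S_\Gamma}\bigr\rVert_{\LL^\infty}\), which is finite by (ii), and upgrading the a.e.\ bound from generators to all \(\lambda\in\Lambda\) via a countable-intersection argument on the cocycle relation yields \(|\beta(\lambda,x)|_{S_\Gamma}\le K|\lambda|_{S_\Lambda}\) a.e., hence the uniform upper bound \(|\psi_{n,x}(k)\psi_{n,x}(j)^{-1}|_{S_\Gamma}\le 18K\,d_{C_n}(k,j)\).

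The substantial step is the lower Lipschitz bound, where coboundedness enters. I would fix a finite set \(F\subseteq\Lambda\) with \(X_\Gamma\subseteq\bigcup_{\lambda\in F}\lambda\actom X_\Lambda\), set \(D:=\max_{\lambda\in F}|\lambda|_{S_\Lambda}\), and for \(y\in X_\Gamma\) choose \(\lambda_y\in F\) such that \(y_0:=\lambda_y^{-1}\actom y\in X_\Lambda\). Exploiting the commutativity of the \(\Gamma\)- and \(\Lambda\)-actions together with the fact that \(X_\Lambda\) meets each \(\Lambda\)-orbit exactly once, a direct computation yields the key identity
\[
\lambda=\lambda_{\lambda\cdot y}\,\alpha\bigl(\beta(\lambda,y),y_0\bigr)\,\lambda_y^{-1}\qquad\bigl(\lambda\in\Lambda,\ y\in X_\Gamma\bigr),
\]
and the subadditivity of \(\alpha\) along a geodesic word \(\beta(\lambda,y)=s_1\cdots s_m\) in \(\Gamma\) then produces the fundamental estimate
\[
|\lambda|_{S_\Lambda}\;\le\;2D\;+\;\sum_{i=1}^{m}|\alpha(s_i,\gamma_i\cdot y_0)|_{S_\Lambda},\qquad\gamma_i:=s_{i+1}\cdots s_m.
\]

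Now set \(r:=r(n)\). By Markov's inequality applied to (i), the bad set \(B_r:=\{z\in X_\Lambda:\exists s\in S_\Gamma,\,|\alpha(s,z)|_{S_\Lambda}>n/r\}\) satisfies \(\mu(B_r)=O(1/\varphi(n/r))\). I would call \(x\in X_\Gamma\) \emph{\(n\)-good} if \(\gamma\cdot y_{j,x}^{(0)}\notin B_r\) for every \(j\in\{0,\dots,n-1\}\) and every \(\gamma\) in the \(S_\Gamma\)-ball of radius \(r\), where \(y_{j,x}^{(0)}\in X_\Lambda\) denotes the canonical \(F\)-lift of \(\varphi_n(j)\varphi_n(0)^{-1}\cdot x\). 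A union bound combined with measure-preservation of both actions controls the measure of \(n\)-bad \(x\) by a constant multiple of \(n^2 r\vol_{S_\Gamma}(r)/\varphi(n/r)\), which vanishes by \eqref{eq:cond1}; hence \(n\)-good points exist for all large \(n\). For an \(n\)-good \(x\), the contrapositive of the fundamental estimate shows that for any pair \((k,j)\) with \(d_{C_n}(k,j)\) a non-negligible fraction of \(n\) one has \(|\beta(\varphi_n(k)\varphi_n(j)^{-1},y_{j,x})|_{S_\Gamma}>r\), giving a lower Lipschitz constant \(a\) for \(\psi_{n,x}\) of order \(r(n)/n\). Applying Corollary~\ref{cor:HypNotEmbed} with \(b=18K\) forces \(a<6\delta\log n/n\), which for all sufficiently large \(n\) contradicts \eqref{eq:cond2'} (that ensures \(r(n)\ge 108(\delta+1)\log n\)). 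I expect the main obstacle to be the bookkeeping needed to propagate the lower Lipschitz bound uniformly to all pairs in \(C_n\) — shorter pairs requiring a separate argument via injectivity of \(\psi_{n,x}\) on the good set — and to calibrate the constants in the union bound so that they match the precise shape of \eqref{eq:cond1}.
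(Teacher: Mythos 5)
Your proposal takes a genuinely different route from the paper's. The paper first reduces to the case $X_\Gamma\subseteq X_\Lambda$ by passing to an auxiliary coupling $(\Omega\times F,\mu)$ with $\tilde\Gamma=\Gamma\times K$ (trading $\delta$ for $\delta+1$), after which the inverse relation $\alpha(\beta(\lambda,x),x)=\lambda$ holds exactly; the lower Lipschitz estimate is then obtained pair-by-pair (Claim~\ref{claim:hyperbolic}), summing over $\gamma\in B_\Gamma(e,R)$ for each pair $(u,v)$, which directly yields the multiplicative bound $d_{S_\Gamma}(b_x(u),b_x(v))\ge\frac{r(n)}{18n}d_{C_n}(u,v)$ on a set of large measure. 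You instead keep the original coupling and derive the identity $\lambda=\lambda_{\lambda\cdot y}\,\alpha(\beta(\lambda,y),y_0)\,\lambda_y^{-1}$, which is correct (and a nice alternative), handling coboundedness inline at the cost of an additive error $2D$. Your single ``$n$-good set'' replaces the paper's pairwise accounting. Both routes feed the same Markov/union-bound machinery and conclude with the embedded-cycle criterion.

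The genuine gap is in how you pass from your fundamental estimate to the hypothesis of Corollary~\ref{cor:HypNotEmbed}. What you get is $|\psi_{n,x}(k)\psi_{n,x}(j)^{-1}|_{S_\Gamma}\ge\frac{r(n)}{n}\bigl(\frac{1}{18}d_{C_n}(k,j)-2D\bigr)$, which is vacuous whenever $d_{C_n}(k,j)\le 36D$, so you do not obtain a multiplicative lower bound valid for \emph{all} pairs of $C_n$, as the statement of Corollary~\ref{cor:HypNotEmbed} requires. You anticipate this and propose injectivity of $\psi_{n,x}$ as the repair, but that does not work: by your own key identity, $\psi_{n,x}(k)=\psi_{n,x}(j)$ forces $\varphi_n(k)\varphi_n(j)^{-1}=\lambda_{\lambda\cdot y}\lambda_y^{-1}\in FF^{-1}$, i.e.\ $|\varphi_n(k)\varphi_n(j)^{-1}|_{S_\Lambda}\le 2D$, hence $d_{C_n}(k,j)\le 36D$ --- precisely the regime in which your lower bound is already vacuous. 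So injectivity can genuinely fail there. The correct repair, which you do not identify, is that the proof of Proposition~\ref{prop:HypNotEmbed} uses the lower bi-Lipschitz inequality only for one pair of points at $C_n$-distance at least a definite fraction of $n$ (one of the four arcs must have length $\ge n/4$); for such pairs the additive $2D$ is absorbed into a $(1-o(1))$ factor, and the contradiction with~\eqref{eq:cond2'} goes through for $n$ large. You would therefore need to restate Proposition~\ref{prop:HypNotEmbed} (or Corollary~\ref{cor:HypNotEmbed}) with the lower bound required only for far pairs, or redo its short proof with an additive tolerance.

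Two small points on calibration. First, your union bound should count pairs $(j,\gamma)$ with $j\in\{0,\dots,n-1\}$ and $|\gamma|_{S_\Gamma}\le r(n)$, and an extra factor $|F|$ from the $F$-lift (the map $y\mapsto\lambda_y^{-1}\actom y$ is piecewise measure-preserving but not injective, so one only gets $\mu(\{y:\lambda_y^{-1}\actom y\in E\})\le|F|\mu(E)$); this gives $O\!\bigl(n\,\vol_{S_\Gamma}(r(n))/\varphi(n/r(n))\bigr)$ rather than your stated $n^2 r(n)\vol_{S_\Gamma}(r(n))/\varphi(n/r(n))$ --- harmless since it is smaller, but worth getting right. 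Second, you apply Corollary~\ref{cor:HypNotEmbed} with the original $\delta$ while the paper applies it with $\delta+1$; this is consistent with your avoiding the reduction, and the slack $(\delta+1)$ versus $\delta$ in~\eqref{eq:cond2'} is exactly what absorbs your $(1-o(1))$ loss, so the constants do close, but this should be spelled out.
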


%\begin{remark}	We attract the attention of the reader on the fact that our assumptions are strongly non symmetric in $\Gamma$ and $\Lambda$. This is partly reflected on the coboundedness condition, but also on the fact that while one of the cocycles is assumed to be exponentially summable, the other one is merely required to be slightly better than square-summable. \end{remark}
Theorem~\ref{thmintroHyper} is an immediate consequence of the following corollary of Theorem~\ref{thm:hyperbolicExpMELinfty}.
\begin{corollary}\label{cor:Lp-infty}
	Let $\Gamma$ be a finitely generated $\delta$-hyperbolic group and let $p>108\delta \ent(S_\Gamma)+2$. Assume that there exists a cobounded $(\LL^p,\LL^{\infty})$-integrable measure equivalence coupling from  $\Gamma$ to a finitely generated group $\Lambda$. Then $\Lambda$ is hyperbolic.
		\end{corollary}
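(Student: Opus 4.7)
The strategy is to specialize Theorem~\ref{thm:hyperbolicExpMELinfty} to the case $\varphi(t)=t^p$. First I will verify that the hypotheses of the corollary translate to those of the theorem. The $\LL^\infty$ condition on $\beta$ is exactly hypothesis~(ii) of \cref{thm:hyperbolicExpMELinfty}. For hypothesis~(i), note that if the coupling is $(\LL^p,\LL^\infty)$-integrable, then by \cref{defIntro:QuantME} there is $\delta_0>0$ such that $\int_{X_\Lambda}(\delta_0\abs{\alpha(s,x)}_{S_\Lambda})^p\, d\mu(x)<\infty$ for all $s\in S_\Gamma$; since $\varphi(t)=t^p$ satisfies $\varphi(ct)\leq C\varphi(t)$, this is equivalent to $\int_{X_\Lambda}\varphi(\abs{\alpha(s,x)}_{S_\Lambda})\,d\mu(x)<\infty$. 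One may also normalize $\mu$ so that $\mu(X_\Gamma)=1$ without affecting anything.

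It remains to exhibit an unbounded non-decreasing function $r$ satisfying both \eqref{eq:cond1} and \eqref{eq:cond2'}. The natural ansatz is $r(n)=c\log n$ for a constant $c>0$ to be determined. Condition~\eqref{eq:cond2'} then reads $c\log n/18\geq 6(\delta+1)\log n$, i.e.\ $c\geq 108(\delta+1)$, so we must take $c$ at least this large.

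The main computation is then to verify~\eqref{eq:cond1}. By definition of $\ent(S_\Gamma)$ as a $\limsup$, for every $\eps>0$ we have $\vol_{S_\Gamma}(r)\leq e^{(\ent(S_\Gamma)+\eps)r}$ for $r$ large enough. With $r(n)=c\log n$ this gives $\vol_{S_\Gamma}(r(n))\leq n^{c(\ent(S_\Gamma)+\eps)}$, so that
\[
\frac{n^2 r(n)\vol_{S_\Gamma}(r(n))}{\varphi(n/r(n))}\leq (c\log n)^{p+1}\, n^{2+c(\ent(S_\Gamma)+\eps)-p},
\]
which tends to $0$ as soon as $c(\ent(S_\Gamma)+\eps)<p-2$.

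Combining the two requirements, it suffices to find $c$ and $\eps>0$ with $c\geq 108(\delta+1)$ and $c(\ent(S_\Gamma)+\eps)<p-2$, which is possible precisely because the assumption $p>108\delta\,\ent(S_\Gamma)+2$ leaves enough slack (absorbing the $+108\,\ent(S_\Gamma)$ and $+c\eps$ corrections into the strict inequality, possibly after tightening the constant in the hypothesis if one is being fully careful). The hyperbolicity of $\Lambda$ then follows directly from \cref{thm:hyperbolicExpMELinfty}. The main point of the proof is thus the constant-balancing step between the geometric condition~\eqref{eq:cond2'}, which forces $r(n)$ to grow like a specific multiple of $\log n$, and the analytic condition~\eqref{eq:cond1}, which forces $c\cdot\ent(S_\Gamma)<p-2$; there is no real obstacle beyond this routine optimization.
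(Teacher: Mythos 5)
Your proposal follows essentially the same route as the paper: specialize \cref{thm:hyperbolicExpMELinfty} with $\varphi(t)=t^p$ and $r(n)=c\log n$, verify \eqref{eq:cond1} via the definition of $\ent(S_\Gamma)$ and the polynomial growth of $\varphi$, and check \eqref{eq:cond2'} directly. Your preliminary remarks on $\mu$-normalization and on dropping the multiplicative constant $\delta_0$ thanks to the doubling property of $t\mapsto t^p$ are correct and match what the paper does (cf.\ \cref{rmk: equivalence vs similarity}).

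One genuine observation you make is worth dwelling on: you correctly point out that \eqref{eq:cond2'} as stated in \cref{thm:hyperbolicExpMELinfty} requires $r(n)/18\geq 6(\delta+1)\log n$, which forces $c\geq 108(\delta+1)$, and combined with $c\,\ent(S_\Gamma)<p-2$ this would want $p>108(\delta+1)\ent(S_\Gamma)+2$ rather than the $p>108\delta\ent(S_\Gamma)+2$ claimed in the corollary. The paper's own proof takes $r(n)=108\delta\log n$, which satisfies only the weaker internal condition \eqref{eq:cond2'''} established under the reduction $X_\Gamma\subseteq X_\Lambda$, not the advertised hypothesis \eqref{eq:cond2'} of the theorem applied as a black box. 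So your suspicion that ``the constant in the hypothesis'' should be tightened is well founded, and your account of the constant-balancing step is in fact slightly more careful than the paper's. Apart from this (minor, easily fixable) constant issue in the original, your proof is correct and complete.
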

\begin{proof}
We apply \cref{thm:hyperbolicExpMELinfty} with $p=108\delta (\ent(S_\Gamma)+\eps)+2$ for some $\eps>0$. We let $r(n)=108\delta \log n$. By definition of $\ent(S_\Gamma)$, we  have $\vol_{S_\Gamma}(r(n))=o(e^{(\ent(S_\Gamma)+\eps/2)r(n)})$. Hence we deduce that 
\[\vol_{S_\Gamma}(r(n))=o(n^{108\delta(\ent(S_\Gamma)+\eps/2)}),\]
which combined with the fact that $\varphi(t)=t^{108\delta (\ent(S_\Gamma)+\eps)+2}$ implies that (\ref{eq:cond1}) is satisfied. Thus, $\Lambda$ is hyperbolic.
	\end{proof}
\begin{corollary}\label{cor: phi coupling hyp}
	Let $\Gamma$ be a finitely generated hyperbolic group. For every $p>q>0$ such that if there is a cobounded $(\varphi,\psi)$-integrable measure equivalence coupling  from $\Gamma$ to a finitely generated group $\Lambda$, where 
	$\varphi(t)=\exp(t^p)$ and $\psi(t)=t^{1+1/q}$, then $\Lambda$ is also hyperbolic.  
\end{corollary}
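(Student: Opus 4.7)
The plan is to apply \cref{thm:hyperbolicExpME} directly to the coupling from $\Gamma$ to $\Lambda$, with $\varphi(t)=\exp(t^p)$ and $\psi(t)=t^{1+1/q}$, choosing the auxiliary function $r(n)$ appropriately. A preliminary observation: the $(\varphi,\psi)$-integrability hypothesis yields some $\delta>0$ rescaling the arguments of $\varphi$ and $\psi$ in the cocycle integrals, as in Definition~\ref{defIntro:QuantME}. Since $\varphi(\delta t)=\exp(\delta^p t^p)$ and $\psi(\delta t)=\delta^{1+1/q}t^{1+1/q}$, only the multiplicative constants in the estimates change, and $\psi^{-1}$ retains the form $\psi^{-1}(s)\sim c\, s^{q/(q+1)}$. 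Thus we may reduce to the situation where the theorem's hypotheses hold for $\varphi$ and $\psi$ themselves, up to adjusting the constant $L$ and the implicit constants that will appear below.

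I would then take $r(n)=K n^{q/(q+1)}$, with $K>0$ to be chosen large. For the verification of~(\ref{eq:cond2}), the dominant term of the right-hand side is $3\psi^{-1}(3Ln)$, which is asymptotic to $3(3L)^{q/(q+1)} n^{q/(q+1)}$, while the logarithmic contribution $4(\delta+1)\log_2 n$ is negligible as $n\to\infty$. Hence (\ref{eq:cond2}) holds for all large $n$ as soon as $K$ is chosen larger than $54(3L)^{q/(q+1)}$ (say).

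The substantive check is (\ref{eq:cond1}). Using that balls in any finitely generated group grow at most exponentially, we have $\vol_{S_\Gamma}(r(n))\leq e^{Mr(n)} = e^{MKn^{q/(q+1)}}$ for some $M$ depending on $\Gamma$ and $S_\Gamma$. Thus the numerator of the ratio in (\ref{eq:cond1}) is bounded above by
\[
n^3 \cdot \exp\bigl(MKn^{q/(q+1)}\bigr),
\]
whereas the denominator equals
\[
\varphi(n/r(n)) = \exp\bigl((n/r(n))^p\bigr) = \exp\bigl(K^{-p}\, n^{p/(q+1)}\bigr).
\]
The key observation is that the strict inequality $p>q$ yields $p/(q+1) > q/(q+1)$, so the exponent in the denominator grows strictly faster than any power of $n$ times $n^{q/(q+1)}$ appearing in the numerator. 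Therefore the ratio tends to $0$, establishing~(\ref{eq:cond1}).

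I do not anticipate a serious obstacle: the only delicate steps are (i) cleanly absorbing the scaling constant $\delta$ coming from the definition of $(\varphi,\psi)$-integrability, which is innocuous here precisely because $p/(q+1) > q/(q+1)$ leaves a gap that absorbs any multiplicative constant in the exponent, and (ii) checking that the logarithmic and polynomial terms in~(\ref{eq:cond2}) do not obstruct the choice of a sublinear $r(n)$. Once both conditions are satisfied, \cref{thm:hyperbolicExpME} applies and $\Lambda$ is hyperbolic.
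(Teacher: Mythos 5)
Your proposal is correct and follows essentially the same route as the paper's: choose $r(n)$ a suitable power of $n$, verify conditions (\ref{eq:cond1}) and (\ref{eq:cond2}), and apply \cref{thm:hyperbolicExpME}. The only cosmetic difference is that you set $r(n)=Kn^{q/(q+1)}$ with a large constant $K$, whereas the paper takes $r(n)=n^{\eta/(1+\eta)}$ for an intermediate exponent $\eta\in(q,p)$; in both cases the $\delta$-rescaling coming from $(\varphi,\psi)$-integrability is absorbed by the same gap $p/(q+1)>q/(q+1)$ (the paper makes this explicit by working with $\varphi_\eps$).
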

\begin{proof}
	We consider our cobounded $(\varphi,\psi)$-integrable measure equivalence coupling $(\Omega,X_\Gamma,X_\Lambda,\mu)$ and normalize $\mu$ so that $\mu(X_\Gamma)=1$. We pick $\eta$ strictly between $q$ and $p$, we let $r(n)=n^{\frac{\eta}{1+\eta}}$, and we define  
	\[L=\max_{s\in S_\Lambda}\int_{X_\Gamma}\psi(|\beta(s,x)|_{S_\Gamma})d\mu(x).\]
	%The latter is possible because $\psi(t)$ is a power of $t$, as noted in \cref{rmk: no c in phi equivalence}.
	Note that $\psi^{-1}(t)=t^{\frac{q}{1+q}}$. So (\ref{eq:cond2}) follows from the fact that $\eta>q$.
	Finally, take $\eps>0$ such that 
	\[\max_{s\in S_\Gamma}\int_{X_\Lambda}\varphi_\eps(|\alpha(s,x)|_{S_\Lambda})d\mu(x)<\infty,\]
	where $\varphi_\eps(t) = \varphi(\eps t)$.
Note that $n/r(n)=n^{\frac{1}{1+\eta}}$. Hence $\varphi_{\eps}(n/r(n))= \exp(\eps^p n^{\frac{p}{1+\eta}})$, while 
	\[\vol_{S_\Gamma}(r(n))\leq |S_\Gamma|^{n^{\frac{\eta}{1+\eta}}}\]
Hence since $\eta<p$, we have 
\[\frac{\vol_{S_\Gamma}(r(n))}{\varphi_{\eps}(n/r(n))}=O(n^{-k})\]
for any $k>0$. So (\ref{eq:cond1}) is satisfied and we  conclude by \cref{thm:hyperbolicExpME} that $\Lambda$ is hyperbolic.
\end{proof}

\begin{proof}[Proof of Theorems~\ref{thm:hyperbolicExpME} and~\ref{thm:hyperbolicExpMELinfty}]
	Recall that 
	we use the notation $\actom$ for smooth actions and
	\(\cdot\) for the induced actions on the  respective fundamental domains. 
	Let us start by strengthening the coboundedness condition. 
	
	\begin{clai}
		 Replacing Condition (\ref{eq:cond2}) and Condition (\ref{eq:cond2'}) respectively by the slightly weaker conditions 
		 \begin{equation}\label{eq:cond2"}
		 \frac{r(n)}{18}\geq 4\delta\log_2 n+3\psi^{-1}(3Ln)-3,
		 \end{equation} 
		 and
 \begin{equation}\label{eq:cond2'''} 
	\frac{r(n)}{18}\geq 6 \delta\log n,
	\end{equation}	
 it is enough to prove  \cref{thm:hyperbolicExpME} and \cref{thm:hyperbolicExpMELinfty} under the assumption that $X_\Gamma\subseteq X_\Lambda$.  
	\end{clai}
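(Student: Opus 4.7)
The plan is to use coboundedness to replace the given $\Lambda$-fundamental domain $X_\Lambda$ by a new one $X_\Lambda' \supseteq X_\Gamma$, in such a way that the integrability conditions on the cocycles are preserved up to a bounded perturbation coming from the finite set of $\Lambda$-translates that covers $X_\Gamma$. The slackening of $(\delta+1)$ to $\delta$ in the weakened conditions is exactly what accommodates this perturbation.

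By coboundedness, fix a finite subset $F = \{\lambda_1,\dots,\lambda_k\} \subset \Lambda$ with $X_\Gamma \subseteq \bigcup_{i=1}^k \lambda_i \actom X_\Lambda$. The freeness of the $\Lambda$-action together with uniqueness of $\Lambda$-orbit representatives in $X_\Lambda$ yields a Borel partition $X_\Gamma = \bigsqcup_{i=1}^k A_i$ such that $\lambda_i\inv \actom A_i \subseteq X_\Lambda$, and moreover the sets $\lambda_i\inv \actom A_i$ are pairwise disjoint. The candidate for the new domain is then
\[
X_\Lambda' := X_\Gamma \cup \Bigl(X_\Lambda \setminus \bigsqcup_{i=1}^k \lambda_i\inv \actom A_i\Bigr).
\]
One checks that $X_\Lambda'$ is a Borel $\Lambda$-fundamental domain containing $X_\Gamma$, possibly after a preliminary Borel transversal selection inside $X_\Gamma$ to remove the at-most-$k$ collisions per $\Lambda$-orbit (which, if needed, adjusts $X_\Gamma$ by a controlled $\Gamma$-translation inside each $\Gamma$-orbit; the bound $k$ on the cardinality of $\Lambda\actom x \cap X_\Gamma$ is itself a direct consequence of coboundedness and freeness).

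The new cocycle $\alpha'$ associated to $X_\Lambda'$ satisfies $|\alpha'(s,x)|_{S_\Lambda} \leq |\alpha(s,\cdot)|_{S_\Lambda} + \max_{\lambda \in F}|\lambda|_{S_\Lambda}$, so condition (i) persists, up to the multiplicative $\delta>0$ of the $\varphi$-equivalence in \cref{def: phi psi meq cocycle free}. Similarly, $\beta'$ differs from $\beta$ by a bounded additive $S_\Gamma$-shift coming from the $X_\Gamma$-adjustment, so condition (ii) persists with $L$ possibly replaced by some $L' = L + O(1)$. Condition~(\ref{eq:cond1}) is untouched since it only involves $r$, $\varphi$, and $\vol_{S_\Gamma}$. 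The weakening from $(\delta+1)\log_2 n$ to $\delta\log_2 n$ (together with the additive $-3$) in the replacements for~(\ref{eq:cond2}) and~(\ref{eq:cond2'}) exactly absorbs the $O(\log n)$ deterioration caused by the bounded perturbations, since such perturbations translate into a $+1$ increase of the effective hyperbolicity constant when \cref{prop:HypNotEmbed} is later invoked. The principal technical hurdle is the Borel selection step for $\Lambda$-orbits meeting $X_\Gamma$ in multiple points; once this is in place the verification that $(\Omega, X_\Gamma, X_\Lambda', \mu)$ satisfies the reduced hypotheses is routine, and we may henceforth assume $X_\Gamma \subseteq X_\Lambda$.
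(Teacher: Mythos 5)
Your proposal takes a genuinely different route from the paper, and unfortunately it has a fatal gap. You try to produce a new $\Lambda$-fundamental domain $X_\Lambda' \subseteq \Omega$ with $X_\Gamma \subseteq X_\Lambda'$, but any two $\Lambda$-fundamental domains have the same measure, so this requires $\mu(X_\Gamma) \le \mu(X_\Lambda)$. Coboundedness only gives $\mu(X_\Gamma) \le |F|\,\mu(X_\Lambda)$, and nothing in the hypotheses forces $\mu(X_\Gamma) \le \mu(X_\Lambda)$; when the coupling index is $>1$ (i.e.\ $\mu(X_\Gamma) > \mu(X_\Lambda)$), no rearrangement of fundamental domains inside $\Omega$ can produce the desired inclusion. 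This same obstruction shows up concretely in your ``Borel transversal selection'' step: when a $\Lambda$-orbit meets $X_\Gamma$ in more than one point, those points lie in \emph{different} $\Gamma$-orbits, so you cannot throw away the extra ones by a $\Gamma$-translation without destroying the fact that $X_\Gamma$ is a $\Gamma$-fundamental domain, and there is no reason a consistent reassignment across all orbits exists.

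The paper sidesteps both problems by \emph{inflating the coupling space}, not by rearranging domains. It sets $\tilde\Omega = \Omega \times F$, adjoins a finite group $K$ acting simply transitively on $F$, and works with $\tilde\Gamma = \Gamma \times K$ acting on $\tilde\Omega$. The new $\tilde\Gamma$-fundamental domain $\tilde X_{\tilde\Gamma} = \bigsqcup_{f\in F}(X_\Gamma \cap f\actom X_\Lambda)\times\{f\}$ sits inside $\tilde X_\Lambda = \bigsqcup_{f\in F}(f\actom X_\Lambda)\times\{f\}$ automatically: the disjointness of the $f\actom X_\Lambda$ and the coboundedness together give a Borel partition of $X_\Gamma$ by which translate it lies in, and the extra fiber coordinate provides the room you were missing. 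The measure constraint $\mu(\tilde X_{\tilde\Gamma}) = \mu(X_\Gamma) \le |F|\,\mu(X_\Lambda) = \mu(\tilde X_\Lambda)$ is now exactly what coboundedness supplies. Finally, your explanation of the weakening $(\delta+1)\mapsto\delta$ is also off: it has nothing to do with absorbing a bounded perturbation of the cocycles; it is because passing to $\tilde\Gamma = \Gamma\times K$ with generating set $S_\Gamma\cup K$ increases the hyperbolicity constant of the Cayley graph from $\delta$ to $\delta+1$ (and the additive $-3$, likewise, compensates the change in $\psi$ arising from the cost of moving within a fiber).
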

	\begin{cproof}
		Assuming we have a coupling satisfying the conditions of Theorem~\ref{thm:hyperbolicExpME} 
		(resp.\ \cref{thm:hyperbolicExpMELinfty}), 
		we build a new coupling satisfying Condition~\eqref{eq:cond2"} (resp.\ Condition~\eqref{eq:cond2'''}) with 
		$X_\Gamma\subseteq X_\Lambda$.
		
		Since our initial coupling is cobounded, there exists a finite subset $F$ of 
		$\Lambda$ such that $X_\Gamma\subseteq F\actom X_\Lambda$. Consider the 
		new coupling space $\tilde \Omega\colon=\Omega\times F$, let $K$ be a finite 
		group which acts simply transitively on $F$, and let $\tilde 
		\Gamma=\Gamma\times K$ act on $\tilde \Omega$ by 
		$(\gamma,k)\actom(\omega,f)=(\gamma\actom \omega,kf)$. This 
		action is smooth, and we take as a fundamental domain the set 
		\[
		\tilde X_{\tilde\Gamma}\colon=\bigsqcup_{f\in F} (X_\Gamma\cap f\actom X_\Lambda)\times \{f\}
		\]	
		The $\Lambda$-action on $\tilde \Omega$ is the action on the first coordinate; a fundamental domain is provided by 
		\[
		\tilde X_\Lambda=\bigsqcup_{f\in F}(f\actom X_\Lambda)\times \{f\}.
		\] 
		Viewing both $\Gamma$ and $K$ as subgroups of $\tilde\Gamma$, the latter has $S_{\tilde\Gamma}=S_\Gamma\cup K$ as a finite generating set. In fact, with this generating set $\tilde \Gamma$ is $\tilde{\delta}$-hyperbolic, with $\tilde{\delta}=\delta+1$. It follows that Condition \eqref{eq:cond2'''} holds in this new setup.
		 Also observe that the volume growth of $\tilde\Gamma$ is at most $|K|$ times that of $\Gamma$, so Condition \eqref{eq:cond1}  is preserved.

		 In what follows, we implicitely use the fact that 
		 our quantitative conditions (i) and (ii) can be recast using the notion of 
		 $\varphi$-similarity between fundamental domains of a smooth action
		 as explained in Section \ref{sec: prelim quant}. 
		 We also use the straightforward fact that
		 $\LL^\infty$-equivalence
		refines $\varphi$-similarity.
		
		We can now show that condition (i) is still met by the new generating set 
		$S_{\tilde\Gamma}=S_\Gamma\cup K$. 
		Indeed $\tilde X_\Lambda$ is $\LL^\infty$-equivalent to the fundamental 
		domain $X_\Lambda\times F$, and for all $\gamma\in\Gamma$, 
		$x\in X_\Lambda$ and $f\in F$ we have
		$d_{S_\Lambda}(\gamma\actom(x,f),\gamma\actx(x,f))=d_{S_\Lambda}(\gamma\actom x,\gamma\actx x)$, 
		so for all $\tilde\gamma\in S_{\tilde \Gamma}$ we have that $\tilde\gamma\actom X_\Lambda\times F$	is $\varphi$-similar to $X_\Lambda\times F$, so  $\tilde\gamma\actom\tilde X_\Lambda$ is $\varphi$-similar to $\tilde X_\Lambda$.

		For condition (ii), we have, by construction, for all $x\in X_\Gamma$ and all $f,f'\in F$ the inequality $d_{S_{\tilde \Gamma}}((x,f),(x,f'))\leq 1$, so for every $\lambda\in \Lambda$ we have
		\[
		d_{S_{\tilde\Gamma}}(\lambda\cdot (x,f),\lambda\actom(x,f))\leq 1+d_{S_\Gamma}(\lambda\cdot x,\lambda\actom x) = 1 + \abs{\beta(\lambda,x)}_{S_\Gamma},
		\]
		hence the new coupling satisfies the same conditions replacing $\psi(t)$ by $\tilde{\psi}(t)=\psi(\max\{t-1,0\})$ in \cref{thm:hyperbolicExpME}. Note that for $n$ large enough, $\psi^{-1}(3Ln)\geq 1$.  Since $\tilde{\psi}(t) = \psi(t-1)$ for all $t\ge 1$, we deduce that for large enough $n$,
\[\frac{r(n)}{18} \geq 4\tilde{\delta}\log_2 n+3\tilde{\psi}^{-1}(3Ln)-3.\]				
%		\begin{align*}
%			r(n) - 8\psi'^{-1}(3Ln)&\ge r(n) - 8\psi^{-1}(3Ln) - 8 & \text{ as }  \psi'(t) = \psi(t-1)\\
%			& > \frac{\log (\psi^{-1}(3Ln)n)}{\eps} -8 & \\
%			& \ge M\log \left(\frac{\psi^{-1}(3Ln)+1}{2}n\right)-8&\text{ as } \psi^{-1}(3Ln)\geq 1\\
%			& = M\log \left(\frac{1}{2}\psi'^{-1}(3Ln)n\right)-8& \text{ as } \psi'(t) = \psi(t-1)\\
%			& = M\log \left(\psi'^{-1}(3Ln)n\right) - M\log 2-8\\
%			& \ge \frac{M\log \left(\psi'^{-1}(3Ln)n\right)}{2} & \text{ as }  \frac{\log \left(\psi'^{-1}(3Ln)n\right)}{2\eps} \ge \frac{M\log(2)}{2}+8.
%		\end{align*}
%		In the second case, since $L\geq 1$, we have
%		\begin{align*}
%			r(n)-8(L+1)& \ge \frac{\log (Ln)}{\eps}-8\\
%			&\ge M\log (\frac{L+1}{2}n)-8\\
%			&\ge M\log ((L+1)n)-M\log(2)-8\\
%			& \ge \frac{M\log \left((L+1)n\right)}{2} & \frac{\log \left((L+1)Mn\right)}{2} \ge \frac{\log(2)}{\eps}+8.
%		\end{align*}
%\fi
So the claim is proved.
	\end{cproof}
From now on, we thus assume that $X_\Gamma\subseteq X_\Lambda$ and  we normalize the measure so that  $\mu(X_\Gamma)=1$.
	Suppose by contradiction that $\Lambda$ is not hyperbolic.
	Theorem~\ref{thmIntro:Nonhyperbolic} provides us with a cycle $C_n$ of arbitrary 
	large length $n$,  and a map $C_n\to \Lambda$ 
	which is $1$-Lipschitz and contracts distances at most by a factor $18$. 
	In what follows we consider $C_n$ as a subset of $\Lambda$. Let $K$ be such that $\int_{X_\Lambda} \varphi(|\alpha(s,x)|_{S_\Lambda}) d\mu(x)\leq K$ for all $s\in S_\Gamma$. 

	For every $x\in X_\Gamma$ we denote by $b_x\colon\Lambda\to \Gamma$ the map defined by $b_x(\lambda)=\beta(\lambda^{-1},x)^{-1}$ for every $\lambda\in\Lambda$.
	We will use throughout the following straightforward consequence of the cocycle relation: for all $u,v\in \Lambda$, we have
	\begin{equation}\label{eq:cocycleb}
	b_x(u)^{-1}b_x(v)=\beta(v^{-1}u,u^{-1}\cdot x)^{-1}.
	\end{equation}
	We endow \(\Gamma\) and \(\Lambda\) with their usual left-invariant Cayley metrics,
	denoted by \(d_{S_\Gamma}\) and \(d_{S_\Lambda}\) respectively (so the map
	\(\gamma\mapsto \gamma\inv\actom x\) is an isometry if we endow \(\Gamma\actom x\) with
	the Schreier metric that we previously used and denoted by \(d_{S_\Gamma}\) as well).
	
	\paragraph{Upper estimates for the restriction of $b_x$ to $C_n$.}
	In the context of \cref{thm:hyperbolicExpMELinfty}, the boundedness
    of each $\beta(t,\cdot)$ for $t\in S_\Lambda$ and the cocycle identity yield that $b_x$ is a.e.\ $L$-Lipschitz for some constant $L$.

	On the other hand, under the assumption of \cref{thm:hyperbolicExpME}, we claim that with probability at least $2/3$, the restriction of $b_x$ to $C_n$ is  $\psi^{-1}\left(3Ln\right)$-Lipschitz.
	%Here we use the integrability condition for $\beta$.
	For every $u$ and $v$ adjacent in $C_n$ there exists an $s_{u,v}\in S_\Lambda$ such that $u=vs_{u,v}$. By (\ref{eq:cocycleb}), we have: \[d_{S_\Gamma}(b_x(v),b_x(u)) = |b_x(u)^{-1}b_x(v)|_{S_\Gamma}= |\beta(s_{u,v},u^{-1}\cdot x)|_{S_\Gamma}.\] Next consider for any $u\in C_n$ the set of all $x\in X_\Gamma$ such that $\psi(|\beta(s_{u,v},u^{-1}\cdot x)|_{S_\Gamma})\ge 3Ln$. By Markov's inequality, these sets have measure at most $\frac{1}{3n}$ and therefore the set of all $x\in X_\Gamma$ such that $b_x$ is $\psi^{-1}\left(3Ln\right)$-Lipschitz in restriction to $C_n$ has measure at least $1-n\cdot \frac{1}{3n}=\frac{2}{3}$. So our claim follows.

	\paragraph{Lower estimates for the restriction of $b_x$ to $C_n$.}
	Providing lower estimates on the quasi-isometric embedding constants is more involved as this requires to apply the cocycle $\alpha$ to  $C_n$.  We shall use the inverse relation between $\alpha$ and $\beta$ and the inclusion $X_\Gamma\subseteq X_\Lambda$: for all $x\in X_\Gamma$ and $\lambda\in \Lambda$
	\begin{equation}\label{eq:inverse}\alpha(\beta(\lambda,x),x) = \lambda \end{equation}
	We claim that we have the following key inequality.
	\begin{clai}\label{claim:hyperbolic}

		For every $R>0$, and $u$ and $v$ in $\Lambda$ , we have \[\mu\left(\left\{x\in X_\Gamma \colon d_{S_\Gamma}\big(b_x(v),b_x(u)\big)\le R\right\}\right)\le KR\frac{ \vol_{S_\Gamma}(R)}{\varphi\left(\frac{d_{S_\Lambda}(u,v)}{R}\right)},\]
where $K=\max_{s\in S_\Gamma}\int_{X_\Lambda} \varphi(|\alpha(s,x)|_{S_\Lambda}) d\mu(x).$
			\end{clai}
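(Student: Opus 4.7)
The plan is to reduce the claim to a Markov-style estimate on $\alpha$ via the inverse relation \eqref{eq:inverse} between $\alpha$ and $\beta$. First I would rewrite the event in question using the identity \eqref{eq:cocycleb}: letting $\lambda=v^{-1}u$ and substituting $y=u^{-1}\cdot x$, the condition $d_{S_\Gamma}(b_x(v),b_x(u))\leq R$ becomes exactly $\abs{\beta(\lambda,y)}_{S_\Gamma}\leq R$. Since the $\Lambda$-action $\cdot$ on $X_\Gamma$ is measure-preserving, it suffices to bound
\[
\mu\bigl(\{y\in X_\Gamma:\abs{\beta(\lambda,y)}_{S_\Gamma}\leq R\}\bigr).
\]

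Next I would partition this set according to the value of $\gamma:=\beta(\lambda,y)$, which ranges over the ball of radius $R$ in $\Gamma$, hence takes at most $\vol_{S_\Gamma}(R)$ values. For each such $\gamma$ write $F_\gamma=\{y\in X_\Gamma:\beta(\lambda,y)=\gamma\}$. The key observation is that for every $y\in F_\gamma$, the inversion formula \eqref{eq:inverse} gives $\alpha(\gamma,y)=\lambda$. Writing $\gamma=s_1\cdots s_k$ as a geodesic word with $k=\abs{\gamma}_{S_\Gamma}\leq R$ and iterating the cocycle relation for $\alpha$ stated at the end of Section~\ref{sec:quantme}, we obtain
\[
\lambda=\alpha(\gamma,y)=\alpha(s_1,(s_2\cdots s_k)\cdot y)\,\alpha(s_2,(s_3\cdots s_k)\cdot y)\cdots\alpha(s_k,y),
\]
so by the triangle inequality at least one of the $k$ factors has $S_\Lambda$-length at least $\abs{\lambda}_{S_\Lambda}/k\geq \abs{\lambda}_{S_\Lambda}/R=d_{S_\Lambda}(u,v)/R$.

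Thus $F_\gamma$ is contained in the union over $i=1,\dots,k$ of the sets $\{y\in X_\Gamma:\abs{\alpha(s_i,\gamma_i\cdot y)}_{S_\Lambda}\geq d_{S_\Lambda}(u,v)/R\}$, where $\gamma_i=s_{i+1}\cdots s_k$. Since $X_\Gamma\subseteq X_\Lambda$ and the $\Gamma$-action on $X_\Lambda$ is measure-preserving, each such set has measure at most $\mu(\{z\in X_\Lambda:\varphi(\abs{\alpha(s_i,z)}_{S_\Lambda})\geq \varphi(d_{S_\Lambda}(u,v)/R)\})$, which by Markov's inequality is bounded above by $K/\varphi(d_{S_\Lambda}(u,v)/R)$. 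Summing the $k\leq R$ terms yields $\mu(F_\gamma)\leq KR/\varphi(d_{S_\Lambda}(u,v)/R)$, and then summing over the at most $\vol_{S_\Gamma}(R)$ values of $\gamma$ gives the desired bound.

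The only mildly delicate point is keeping track of where things live (the interplay between $X_\Gamma\subseteq X_\Lambda$ and the two actions $\actom$ and $\cdot$) to justify invariance under each change of variable; once the inclusion $X_\Gamma\subseteq X_\Lambda$ has been arranged by the earlier claim, this is routine. The substantive content is really the telescoping via the cocycle identity that transforms a condition on $\beta$ into a sum of conditions on $\alpha(s,\cdot)$ for generators $s\in S_\Gamma$, which is where the integrability assumption (i) is exploited.
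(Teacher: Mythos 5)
Your proof is correct and follows essentially the same route as the paper's: partition by the value of $\gamma=\beta(v^{-1}u,u^{-1}\cdot x)$ (the paper's $A_\gamma$ is your $F_\gamma$ after the change of variable $y=u^{-1}\cdot x$), use the inverse relation \eqref{eq:inverse} and the telescoping cocycle identity to force one generator-step of $\alpha$ to be long, then apply Markov's inequality and sum. The only cosmetic difference is that you use a union bound with the uniform estimate $d_{S_\Lambda}(u,v)/R$ throughout, while the paper phrases the same step as a pigeonhole with $d_{S_\Lambda}(u,v)/|\gamma|_{S_\Gamma}$ and only replaces $|\gamma|_{S_\Gamma}$ by $R$ at the end using monotonicity of $\varphi$; both give the identical final bound.
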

	\begin{cproof}
        Let us fix \(u,v\in\Lambda\).
		For any $\gamma\in \Gamma$, we define the set \[A_\gamma=\{x\in X_\Gamma\colon b_x(u)^{-1}b_x(v)=\gamma^{-1}\}.\] 
		By (\ref{eq:inverse}) and (\ref{eq:cocycleb}), we have that for every $x\in A_\gamma$, \[\alpha\left(\gamma, u^{-1}\cdot x)\right)=\alpha\left(\beta(v^{-1}u,u^{-1}\cdot x),u^{-1}\cdot x\right)=v^{-1}u,\]
		from which we deduce that $\left|\alpha\left(\gamma, u^{-1}\cdot x)\right)\right|_{S_\Lambda} = d_{S_\Lambda}(u,v).$
		
		Let us start giving an upper bound of $\mu(A_\gamma)$ as a function of $|\gamma|$.
		Write $\gamma=s_1\ldots s_{|\gamma|_{S_\Gamma}}$ with $s_i\in S_\Gamma$. By the cocycle relation and the triangular inequality, there exists an $i$ such that the set $$\left\{x\in A_\gamma\colon \left|\alpha\left(s_i,s_{i+1}\ldots s_{|\gamma|_{S_\Gamma}}\cdot( u^{-1}\cdot x)\right)\right|_{S_\Lambda}\ge \frac{ d_{S_\Lambda}(u,v)}{|\gamma|_{S_\Gamma}}\right\}$$
		has measure at least $\frac{\mu(A_\gamma)}{|\gamma|_{S_\Gamma}}$.
		Letting $s=s_i$, we have that \[\mu\left(\left\{y\in X_\Lambda\colon |\alpha(s,y)|_{S_\Lambda}\ge \frac{ d_{S_\Lambda}(u,v)}{|\gamma|_{S_\Gamma}}\right\}\right)
		\ge \frac{\mu(A_\gamma)}{|\gamma|_{S_\Gamma}},\]
		from which we deduce the following upper bound on the measure of $A_\gamma$:
\[\mu(A_\gamma)\le |\gamma|_{S_\Gamma}\,\mu\left(\left\{y\in X_\Lambda\colon |\alpha(s,y)|_{S_\Lambda} \ge  \frac{ d_{S_\Lambda}(u,v)}{|\gamma|_{S_\Gamma}}\right\}\right)\]

By Markov's inequality and the definition of $K=\max_{s\in S_\Gamma}\int_{X_\Lambda} \varphi(|\alpha(s,x)|_{S_\Lambda}) d\mu(x)$, we deduce 
\[\mu(A_\gamma) \le \frac{ K|\gamma|_{S_\Gamma}}{\varphi\left(\frac{d_{S_\Lambda}(u,v)}{|\gamma|_{S_\Gamma}}\right)}.\]                                                                                                                                     
Using that $\varphi$ is non-decreasing, we get that for all $R>0$,
		\begin{align*}
		\mu\left(\{x\in X_\Gamma \colon |b_x(v)^{-1}b_x(u)|_{S_\Gamma}\le R\}\right) & = \sum_{\gamma\in B_\Gamma(e_\Gamma,R)} \mu(A_\gamma)\\
		& \le \sum_{\gamma\in B_\Gamma(e_\Gamma,R)} \frac{ K|\gamma|_{S_\Gamma}}{\varphi\left(\frac{d_{S_\Lambda}(u,v)}{R}\right)}\\
		& \le KR\frac{ \vol_{S_\Gamma}(R)}{\varphi\left(\frac{d_{S_\Lambda}(u,v)}{R}\right)}.
		\end{align*}
		So the claim is proved. \end{cproof}
	Applying \cref{claim:hyperbolic} with $R= \frac{r(n)}{n}d_{S_\Lambda}(u,v)$, $u,v\in C_n$, observing that $d_{S_\Lambda}(u,v)\leq n$, we obtain 
	\begin{align*}
	\mu\left(\left\{x\in X_\Gamma \colon d_{S_\Gamma}(b_x(v),b_x(u))\le \frac{r(n)}{n}d_{S_\Lambda}(u,v)\right\}\right) & \le \frac{Kr(n)\vol_{S_\Gamma}(r(n))}{\varphi(n/r(n))}.
	\end{align*}
	 As there are at most $n^2$ pairs $(u,v)$ in $C_n$, we deduce that 
	\begin{align*}
	\mu\left(\left\{x\in X_\Gamma \colon \exists u,v\in C_n\colon d_{S_\Gamma}(b_x(v),b_x(u))\le \frac{r(n)}{n}d_{S_\Lambda}(u,v)\right\}\right)& \leq \frac{Kn^2r(n)\vol_{S_\Gamma}(r(n))}{\varphi(n/r(n))}.
	\end{align*} 
By (\ref{eq:cond1}), there exists $n_0$ such that for $n\geq n_0$, there exists a subset $B$ of $X_\Gamma$ of measure at least $2/3$ on which for all $u,v\in C_n$, 
\[d_{S_\Gamma}(b_x(v),b_x(u))\ge \frac{r(n)}{n}d_{S_\Lambda}(u,v).\]
	Finally, for all $x$ in the subset  $A\cap B$ which has positive measure, we deduce for every $u,v\in C_n$ that
	\[a_nd_{C_n}(u,v)\le d_{S_\Gamma}(b_x(v),b_x(u)) \le b_nd_{C_n}(u,v),\]
	where in the case of \cref{thm:hyperbolicExpME}, $a_n=\frac{r(n)}{18 n}$ and $b_n=\psi^{-1}\left(3Ln\right)$; 
	and in the case of~\ref{thm:hyperbolicExpMELinfty}, $a_n=\frac{r(n)}{18 n}$ and $b_n=L$. 
In the first case, assuming (\ref{eq:cond2"}), we have 
\[\frac{r(n)}{18}\geq 4\delta \log_2 n+ 3\psi^{-1}\left(3Ln\right),\]
from which we deduce that for $n$ large enough (as $b_n\to \infty$),
	\[na_n\geq  4\delta \log_2 n +3b_n >4\delta \log_2 (b_nn)+4+2b_n,\]
	which contradicts \cref{prop:HypNotEmbed}.
	The second case is similar: assuming (\ref{eq:cond2'''}), we get
 \[na_n=\frac{r(n)}{18}\geq 6\delta \log n,\] 
which for $n$ large enough contradicts \cref{cor:HypNotEmbed}. 
\end{proof}

\bibliographystyle{alphaurl}
\bibliography{bib}

\begin{thebibliography}{DKLMT22}

\bibitem[Aus16]{austinIntegrableMeasureEquivalence2016}
Tim Austin.
\newblock Integrable measure equivalence for groups of polynomial growth.
\newblock {\em Groups, Geometry, and Dynamics}, 10(1):117--154, 2016.
\newblock \href {https://doi.org/10.4171/GGD/345} {\path{doi:10.4171/GGD/345}}.

\bibitem[Bel68]{belinskayaPartitionsLebesguespace1968}
R.~M. Belinskaya.
\newblock Partitions of {{Lebesgue}} space in trajectories defined by ergodic
  automorphisms.
\newblock {\em Functional Analysis and Its Applications}, 2(3):190--199, 1968.
\newblock \href {https://doi.org/10.1007/BF01076120}
  {\path{doi:10.1007/BF01076120}}.

\bibitem[BFS13]{baderIntegrableMeasureEquivalence2013}
Uri Bader, Alex Furman, and Roman Sauer.
\newblock Integrable measure equivalence and rigidity of hyperbolic lattices.
\newblock {\em Inventiones mathematicae}, 194(2):313--379, 2013.
\newblock \href {https://doi.org/10.1007/s00222-012-0445-9}
  {\path{doi:10.1007/s00222-012-0445-9}}.

\bibitem[Bow17]{Bowen2017}
Lewis Bowen.
\newblock Integrable orbit equivalence rigidity for free groups.
\newblock {\em Israel Journal of Mathematics}, 221:471--480, 2017.
\newblock \href {https://doi.org/10.1007/s11856-017-1553-4}
  {\path{doi:10.1007/s11856-017-1553-4}}.

\bibitem[{de }19]{delasalleStrongPropertyHigherrank2019}
Mikael {de la Salle}.
\newblock Strong property ({{T}}) for higher-rank lattices.
\newblock {\em Acta Mathematica}, 223(1):151--193, September 2019.
\newblock \href {https://doi.org/10.4310/ACTA.2019.v223.n1.a3}
  {\path{doi:10.4310/ACTA.2019.v223.n1.a3}}.

\bibitem[DK18]{drutuGeometricGroupTheory2018}
Cornelia Dru{\c t}u and Michael Kapovich.
\newblock {\em Geometric Group Theory. {{With}} an Appendix by {{Bogdan
  Nica}}}, volume~63 of {\em Colloq. {{Publ}}., {{Am}}. {{Math}}. {{Soc}}.}
\newblock {American Mathematical Society (AMS)}, {Providence, RI}, 2018.
\newblock \href {https://doi.org/10.1090/coll/063}
  {\path{doi:10.1090/coll/063}}.

\bibitem[DKLMT22]{delabieQuantitativeMeasureEquivalence2022}
Thiebout Delabie, Juhani Koivisto, Fran{\c c}ois Le~Ma{\^i}tre, and Romain
  Tessera.
\newblock Quantitative measure equivalence between amenable groups.
\newblock {\em Annales Henri Lebesgue}, 5:1417--1487, 2022.
\newblock \href {https://doi.org/10.5802/ahl.155} {\path{doi:10.5802/ahl.155}}.

\bibitem[Fur99]{furmanGromovMeasureEquivalence1999}
Alex Furman.
\newblock Gromov's {{Measure Equivalence}} and {{Rigidity}} of {{Higher Rank
  Lattices}}.
\newblock {\em Annals of Mathematics}, 150(3):1059--1081, 1999.
\newblock \href {https://doi.org/10.2307/121062} {\path{doi:10.2307/121062}}.

\bibitem[Gro93]{gromovAsymptoticInvariantsInfinite1993}
Mikhael Gromov.
\newblock Asymptotic invariants of infinite groups.
\newblock In {\em Geometric Group Theory: Proceedings of the {{Symposium}} Held
  in {{Sussex}}, 1991: Volume 2}, London {{Mathematical Society}} Lecture Note
  Series: 182. {Cambridge University Press}, 1993.

\bibitem[Gro07]{gromovMetricStructuresRiemannian2007}
Mikhael Gromov.
\newblock {\em Metric Structures for {{Riemannian}} and Non-{{Riemannian}}
  Spaces}.
\newblock Modern {{Birkh\"auser}} Classics. {Birkh\"auser}, {Boston}, 2007.

\bibitem[HM20]{humePoorlyConnectedGroups2020}
David Hume and John Mackay.
\newblock Poorly connected groups.
\newblock {\em Proceedings of the American Mathematical Society},
  148(11):4653--4664, 2020.
\newblock \href {https://doi.org/10.1090/proc/15128}
  {\path{doi:10.1090/proc/15128}}.

\bibitem[OW80]{ornsteinErgodicTheoryAmenable1980}
Donald~S. Ornstein and Benjamin Weiss.
\newblock Ergodic theory of amenable group actions. {{I}}. {{The Rohlin}}
  lemma.
\newblock {\em Bull. Amer. Math. Soc. (N.S.)}, 2(1):161--164, 1980.
\newblock \href {https://doi.org/10.1090/S0273-0979-1980-14702-3}
  {\path{doi:10.1090/S0273-0979-1980-14702-3}}.

\bibitem[Sau02]{sauerMathrmLInvariantsGroups2002}
Roman Sauer.
\newblock {\em $\mathrm{L}^2$-{{Invariants}} of {{Groups}} and {{Discrete
  Measured Groupoids}}}.
\newblock PhD thesis, Universit\"at M\"unster, 2002.

\bibitem[Sha00]{shalomRigidityUnitaryRepresentations2000}
Yehuda Shalom.
\newblock Rigidity, {{Unitary Representations}} of {{Semisimple Groups}}, and
  {{Fundamental Groups}} of {{Manifolds}} with {{Rank One Transformation
  Group}}.
\newblock {\em Annals of Mathematics}, 152(1):113--182, 2000.
\newblock \href {https://doi.org/10.2307/2661380} {\path{doi:10.2307/2661380}}.

\bibitem[Sha04]{shalomHarmonicAnalysisCohomology2004}
Yehuda Shalom.
\newblock Harmonic analysis, cohomology, and the large-scale geometry of
  amenable groups.
\newblock {\em Acta Math.}, 192(2):119--185, 2004.
\newblock \href {https://doi.org/10.1007/BF02392739}
  {\path{doi:10.1007/BF02392739}}.

\bibitem[VS14]{verbeekMetricEmbeddingHyperbolic2014}
Kevin Verbeek and Subhash Suri.
\newblock Metric {{Embedding}}, {{Hyperbolic Space}}, and {{Social Networks}}.
\newblock In {\em Proceedings of the {{Thirtieth Annual Symposium}} on
  {{Computational Geometry}}}, {{SOCG}}'14, pages 501:501--501:510, {New York,
  NY, USA}, 2014. {ACM}.
\newblock \href {https://doi.org/10.1145/2582112.2582139}
  {\path{doi:10.1145/2582112.2582139}}.

\end{thebibliography}
\listoffixmes
\end{document}